\title [On holomorphic sections of Veech holomorphic families] {On holomorphic sections of Veech holomorphic families of Riemann surfaces}
\author{Yoshihiko Shinomiya}
\address{Department of Mathematics
Tokyo Institute of Technology
2-12-1 Ookayama, Meguro-ku, Tokyo 152-8551, JAPAN}
\email{shinomiya.y.aa@m.titech.ac.jp}
\subjclass[2010]{Primary~32G15, Secondary~32G05, 32G08}
\keywords{ holomorphic section, flat surfaces, Veech groups.}
\thanks{This work was supported by JSPS KAKENHI Grant Number 24005650.}
\begin{document}
\maketitle
\bibliographystyle{alpha} 

\theoremstyle{plain}
\newtheorem{theorem}{Theorem}[section]

\theoremstyle{definition}
\newtheorem{definition}[theorem]{Definition}

\theoremstyle{plain}
\newtheorem{proposition}[theorem]{Proposition}

\theoremstyle{plain}
\newtheorem{lemma}[theorem]{Lemma}

\theoremstyle{plain}
\newtheorem{corollary}[theorem]{Corollary}

\theoremstyle{definition}
\newtheorem{example}[theorem]{Example}

\theoremstyle{remark}
\newtheorem*{remark}{\bf Remark}

\theoremstyle{remark}
\newtheorem*{problem}{\bf Problem}
\begin{abstract}
We give upper bounds of the numbers of holomorphic sections of Veech holomorphic families of Riemann surfaces. 
The numbers depend only on the topological types of base Riemann surfaces and fibers.
We also show a relation between types of Veech groups and moduli of cylinder decompositions of flat surfaces.
\end{abstract}

\section{Introduction}
Let $M(g, n)$ be the moduli space of Riemann surfaces of type $(g, n)$ with $3g-3+n>0$.
A triple $(M, \pi, B)$ of a two-dimensional complex manifold $M$, a Riemann surface $B$, and a holomorphic map $\pi: M \rightarrow B$ is called a holomorphic family of Riemann surfaces of type $(g, n)$ over $B$ if each fiber $X_t=\pi^{-1}(t)$ is a Riemann surface of type $(g, n)$ and the induced map $B \ni t \mapsto X_t \in M(g, n)$ is holomorphic. 
The Riemann surface $B$ is called the base space of the holomorphic family  $(M, \pi, B)$ of Riemann surfaces.
A holomorphic family of Riemann surfaces is called locally non-trivial if the induced map is non-constant.
Imayoshi and Shiga \cite{ImaShi88} proved that every locally non-trivial holomorphic family $(M, \pi ,B)$ of Riemann surfaces of finite type has only finitely many holomorphic sections if  the base space $B$ is of finite type.
Shiga \cite{Shiga97} gave upper bounds of the numbers of holomorphic families of Riemann surfaces of type $(g, n)= (0, n)  (n \geq 4)$, $(1, 2)$ and $(2, 0)$.
The upper bounds also give upper bounds of the numbers of holomorphic sections of holomorphic families of Riemann surfaces of such types. 

In this paper, we study holomorphic sections of {\it Veech holomorphic families of Riemann surfaces}. 
Let $X$ be a Riemann surface of type $(g, n)$ with $3g-3+n>0$.
A flat structure $u$ is an Euclidean structure on $X$ with finitely many singularities such that every transition function is of the form $w=\pm z+c$.  
The pair $(X, u)$ of a Riemann surface $X$ and a flat structure $u$ on $X$ is called a flat surface.
All punctures of $X$ and singularities of $u$ are called critical points of the flat surface $(X, u)$.
Denote by $C(X, u)$ the set of all critical points of $(X, u)$.
We assume that the Euclidean areas of flat surfaces are finite.
The affine group ${\rm Aff}^+(X, u)$ of a flat surface $(X, u)$ is the group of all quasiconformal self-maps of $X$ which preserve $C(X, u)$ and are affine with respect to the flat structure $u$. 
Each element $h$ of ${\rm Aff}^+(X, u)$ is called an affine map.
The derivatives $A \in {\rm GL}(2, \mathbb{R})$ of the descriptions $w=Az+c$ of an affine map $h$ is uniquely determined up to the sign and they are in ${\rm SL}(2, \mathbb{R})$.
Thus, we have a homomorphism $D: {\rm Aff}^+(X, u) \rightarrow {\rm PSL}(2, \mathbb{R})$.
We call the homomorphism the derivative map.
The Veech group $\Gamma(X, u)$ of $(X, u)$ is the image of the derivative map.
Veech \cite{Veech89} proved that $\Gamma(X, u)$ is a Fuchsian group and the mirror image $\overline{B}$ of the orbifold $\mathbb{H}/\Gamma(X, u)$ is holomorphically and locally isometrically embedded into the moduli space $M(g, n)$ with the Teichm\"uller metric. 
 Let $B$ be a Riemann surface obtained from $\overline{B}$ by removing all cone points. 
 The Veech holomorphic family of Riemann surfaces of type $(g, n)$ over $B$ induced by $(X, u)$ is the holomorphic family of Riemann surfaces corresponding to the holomorphic 
embedding of $B$ into the moduli space $M(g, n)$.
 We show in \cite{Shinomiya12vhf} that every holomorphic section of Veech holomorphic families of Riemann surfaces is locally the orbit of a point $a \in (X, u)$ for Teichm\"uller deformations and the point satisfies ${\rm Aff}^+(X, u)\{a\}= {\rm Ker}(D)\{a\}$.
In \cite{Shinomiya12vhf}, we also give upper bounds of the numbers of holomorphic sections of Veech holomorphic families of Riemann surfaces such that the corresponding flat surfaces have simple Jenkins-Strebel directions.
The upper bounds depend only on the topological types of fibers and base spaces.
However, flat surfaces do not have simple Jenkins-Strebel direction in general.
In this paper, we give upper bounds of the numbers of holomorphic sections of all Veech holomorphic families of Riemann surfaces which depend only on the topological types of fibers and base spaces.  
We also give a relation between types of Veech groups and moduli of cylinder decompositions of flat surfaces by  Jenkins-Strebel directions.
It claims that ratios of moduli of cylinders restrict to the types of Veech groups.

\section{Preliminaries}\label{Preliminaries}
In this section, we  define flat surfaces, Veech groups, and Veech holomorphic families of Riemann surfaces.
We also study their properties and some theorems in \cite{Shinomiya12vhf} which are referred to in this paper.

Let $\overline{X}$ be a compact Riemann surface of genus $g$ and $X$ a Riemann surface of type $(g, n)$ with $3g-3+n>0$ which is $\overline{X}$ with $n$ points removed.
\begin{definition}[Flat structure and flat surface]
A flat structure $u$ on $X$ is an atlas of $X$ with finitely many singular points which satisfies the following conditions: 
\begin{enumerate} 
\item[(1)] local coordinates of $u$ are compatible with the orientation of $X$,
\item[(2)] transition functions are the form 
\begin{eqnarray}
w=\pm z+c \nonumber
\end{eqnarray}
 in $z(U \cap V)$ for $(U, z), (V, w) \in u$ with $U \cap V \not = \phi$,
\item[(3)]  the atlas $u$ is maximal with respect to $(1)$ and $(2)$.
\end{enumerate}
A pair $(X, u)$ of a Riemann surface $X$ and a flat structure $u$ on $X$ is called a flat surface.
All punctures of $X$ and singular points of $u$ are called critical points of $(X, u)$.
The set of all critical points is denoted by $C(X, u)$.
\end{definition}
On a flat surface $(X, u)$, we may consider Euclidean geometry.
For instance, area, segments, lengths or directions of the segments are considered on $(X, u)$. 
In this paper, we assume that the Euclidean area of $(X, u)$ is finite.
A $\theta$-closed geodesic on $(X, u)$ is a closed geodesic with direction $\theta \in [0, \pi)$ which does not contain critical points.
A segment connecting critical points with direction $\theta$ is called a $\theta$-saddle connection.  
If $\theta=0$, we also call a $\theta$-closed geodesic a horizontal closed geodesic, and a $\theta$-saddle connection a horizontal saddle connection.
\begin{definition}[Jenkins-Strebel direction] 
A direction $\theta \in [0, \pi)$ is called a Jenkins-Strebel direction if almost all points in $(X, u)$ are contained in
$\theta$-closed geodesics. 
\end{definition}
Let $\theta \in [0, \pi)$ be a Jenkins-Strebel direction.
If $z \in (X, u)$ is not contained in $\theta$-closed geodesics, then $z$ is on a $\theta$-saddle connection.
Let us remove all $\theta$-saddle connections from $X$.
Then $X$ has finitely many connected components.
Every connected component $R$ is a cylinder foliated by $\theta$-closed geodesics and the boundary of $R$ consists of saddle connections.
The core curves of the cylinders are not homotopic to each other and not homotopic to a point or a puncture. See \cite{Strebel84}.
\begin{definition}
If a Jenkins-Strebel direction $\theta$ decomposes $X$ into $m$ cylinders $R_1, \cdots, R_m$, then the direction $\theta$ is called a $m$-Jenkins-Strebel direction.
In particular, if $m=1$, then we call the direction $\theta$ a simple Jenkins-Strebel direction. 
The decomposition 
$\{R_i\}$
is called a cylinder decomposition of $(X, u)$ by a Jenkins-Strebel direction $\theta$.
\end{definition}
\begin{remark}
Since the core curves of $R_i$'s are not homotopic to each other, $m$ is less than $3g-3+n$.
\end{remark}

Let $u=\{(U, z)\}$ be a flat structure on $X$.
For every $A \in {\rm SL}(2, \mathbb{R})$, a flat surface $A\cdot (X, u)= (X, u_A)$ is defined by
$u_A=\{(U, A\circ z)\}$.
The set $C(X, u_A)$ of  critical points of $(X, u_A)$ coincides with $C(X, u)$ and $u_A$ gives a new complex structure  of $X$.
The ${\rm SL}(2, \mathbb{R})$-orbit of the flat surface $(X, u)$  in the Teichm\"uller space $T(X)$ is defined by $\Delta=\{[A\cdot(X, u), id]: A \in {\rm SL}(2, \mathbb{R})\}$.
It is easy to show that $[A\cdot(X, u), id]=[UA\cdot(X, u), id]$ in $T(X)$ for $A \in {\rm SL}(2, \mathbb{R})$, $U \in {\rm SO}(2)$.
Thus, the bijection $\phi :  {\rm SO}(2)\setminus {\rm SL}(2, \mathbb{R}) \rightarrow  \mathbb{H}$ defined by  $\phi({\rm SO}(2)\cdot A)=-\overline{A^{-1}(i)}$ induces a map $\iota : \mathbb{H} \rightarrow T(X)$.
Here, $A^{-1}(\cdot)$ acts on $\mathbb{H}$ as a M\"obius transformation.
\begin{proposition}[\cite{EarGar97},\cite{HerSch07}]
The map $\iota: \mathbb{H} \rightarrow T(X)$ is holomorphic and isometric embedding from the hyperbolic plane $\mathbb{H}$ into the Teichm\"uller space $T(X)$ with the Teichm\"uller distance.
 Every holomorphic and isometric embedding from $\mathbb{H}$ into $T(X)$ is constructed from a flat surface as above.
\end{proposition}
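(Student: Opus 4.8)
The plan is to prove the two assertions separately: first that $\iota$ is a holomorphic isometric embedding, and second that every holomorphic isometric embedding of $\mathbb{H}$ into $T(X)$ arises this way. For the first part, I would begin by making the map concrete via the Teichmüller-disk picture. Fix the flat surface $(X,u)$; it determines a holomorphic quadratic differential $q$ on $X$ (the square of the abelian differential $dz$ in the flat charts, suitably interpreted at critical points). A point of $\mathrm{SO}(2)\backslash\mathrm{SL}(2,\mathbb{R})$ corresponds, via $\phi$, to a point of $\mathbb{H}$, and applying $A\in\mathrm{SL}(2,\mathbb{R})$ to the flat charts produces a new complex structure whose Beltrami coefficient relative to $(X,u)$ is of the form $k\,\bar q/|q|$ with $k=k(A)\in[0,1)$ determined by the M\"obius image $A^{-1}(i)$. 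Thus $\iota$ is precisely the Teichm\"uller embedding associated with $q$, and one invokes the standard fact (Teichm\"uller's theorem together with the classical description of Teichm\"uller disks) that such a map is a holomorphic isometry onto a geodesically embedded copy of $\mathbb{H}$ inside $(T(X),d_T)$. The main points to check carefully are that $\phi$ is a well-defined bijection (a short computation with $\mathrm{SO}(2)$-cosets and the $\mathrm{SL}(2,\mathbb{R})$-action on $i\in\mathbb{H}$), that $\iota$ is well-defined (independence of the representative $A$, which follows from $[A\cdot(X,u),\mathrm{id}]=[UA\cdot(X,u),\mathrm{id}]$ stated in the text), and that the Beltrami coefficient of $u_A$ relative to $u$ depends holomorphically on $\phi(\mathrm{SO}(2)\cdot A)\in\mathbb{H}$.

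For the converse, suppose $f\colon\mathbb{H}\to T(X)$ is a holomorphic isometric embedding. By a theorem on isometric embeddings of the hyperbolic plane into Teichm\"uller space (the rigidity of complex geodesics, going back to work on the Kobayashi and Teichm\"uller metrics — one knows that a holomorphic map $\mathbb{H}\to T(X)$ which is isometric for the respective metrics is a complex geodesic, hence a Teichm\"uller disk), the image of $f$ is the Teichm\"uller disk through $f(i_0)$ in some direction, i.e.\ it is generated by some holomorphic quadratic differential $q_0$ on the Riemann surface $X_0=f(i_0)$. After changing the base point by a Teichm\"uller modular transformation we may assume $X_0=X$. The quadratic differential $q_0$ then endows $X$ with a flat structure $u_0$ whose flat charts are the natural coordinates $\zeta=\int\sqrt{q_0}$ away from the zeros and poles, and one checks that the associated map $\iota_0$ built from $(X,u_0)$ as above has the same image as $f$ and, after precomposing with an element of $\mathrm{Aut}(\mathbb{H})=\mathrm{PSL}(2,\mathbb{R})$, agrees with $f$. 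The bookkeeping here — matching the normalization $\phi(\mathrm{SO}(2)\cdot A)=-\overline{A^{-1}(i)}$ to the standard parametrization of the Teichm\"uller disk, and verifying that $q_0$ has the finite-area/at-worst-simple-poles behaviour required of a flat structure — is where the care is needed.

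I expect the main obstacle to be the converse direction, specifically the input that an \emph{isometric} holomorphic embedding of $\mathbb{H}$ must be a Teichm\"uller disk rather than some other kind of complex submanifold. This is exactly the content attributed to Earle--Gardiner \cite{EarGar97} and Herrlich--Schmith\"usen \cite{HerSch07}, so in practice the proof of the Proposition consists in citing those works and carrying out the identification of parametrizations; I would structure the write-up as (i) recall the Teichm\"uller-disk formalism and the holomorphicity/isometry of Teichm\"uller embeddings, (ii) verify directly that $\iota$ is one such embedding in the stated normalization, and (iii) invoke the cited rigidity to get surjectivity onto the set of all such embeddings, leaving only the routine coordinate matching. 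No genuinely new estimate is required; the difficulty is entirely in assembling the standard Teichm\"uller-theory facts with the correct conventions.
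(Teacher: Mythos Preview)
The paper does not prove this proposition: it is stated with attributions to \cite{EarGar97} and \cite{HerSch07} and used as background, with no argument supplied. Your proposal correctly identifies this --- you note yourself that ``in practice the proof of the Proposition consists in citing those works'' --- and the outline you give (identifying $\iota$ with the Teichm\"uller embedding associated to the quadratic differential $q=dz^2$, then invoking the rigidity of complex geodesics for the converse) is the standard route and matches what those references do. There is nothing to compare against in the present paper beyond the citation.
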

Let $\iota: \mathbb{H} \rightarrow T(X)$ be a holomorphic and isometric embedding constructed from a flat surface $(X, u)$ of type $(g, n)$.
The image $\Delta=\iota(\mathbb{H})$ 
is called a Teichm\"uller disk.
We consider the image of the Teichm\"uller disk $\Delta$ into the moduli space $M(g, n)$.
Since $M(g, n)=T(X)/{\rm Mod}(X)$, the image of the Teichm\"uller disk is described as $\Delta/{\rm Stab}(\Delta)$. 
Here, ${\rm Mod}(X)$ is the mapping class group of $X$ and ${\rm Stab}(\Delta)$ is the subgroup of ${\rm Mod}(X)$ consisting of all mapping classes which preserve $\Delta$. 
\begin{definition}[Affine groups]
A quasiconformal self-map $h$ of $X$ is called an affine map of $(X, u)$ if $h$ preserves $C(X, u)$ and, for $(U, z)$ and $(V, w) \in u$ with $h(U) \subset V$, $w \circ h \circ z^{-1}$ is the from
$w=Az+c$
for some $A \in {\rm GL}(2, \mathbb{R})$ and $c \in \mathbb{C}$.
The group of all affine maps of $(X, u)$ is denoted by ${\rm Aff}^+(X, u)$ and we call it the affine group of $(X, u)$.
\end{definition}
By the definition of flat structures, the derivative $A$ of the affine map $w \circ h \circ z^{-1}$ is uniquely determined up to the sign. 
Moreover, the assumption that the area of $(X, u)$ is finite implies that $A$ is in ${\rm SL}(2, \mathbb{R})$.
Therefore, we obtain a homomorphism $D: {\rm Aff}^+(X, u)\rightarrow {\rm PSL}(2, \mathbb{R})$.
The homomorphism is called the derivative map.
\begin{definition}[Veech groups]
We call the image $\Gamma(X, u)=D({\rm Aff}^+(X, u))$ of the derivative map $D$ the Veech group of $(X, u)$.
\end{definition}
\begin{proposition}[\cite{Veech89}]
Affine maps of $(X, u)$ are not homotopic to each other. 
Hence, the group ${\rm Aff}^+(X, u)$ is considered as a subgroup of ${\rm Mod}(X)$.
\end{proposition}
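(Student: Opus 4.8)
The plan is to reduce the whole statement to one claim: an affine map homotopic to the identity must be the identity. Granting this, if $h_1,h_2\in{\rm Aff}^+(X,u)$ are homotopic then $h_2^{-1}\circ h_1\in{\rm Aff}^+(X,u)$ is homotopic to the identity, hence equals it, so $h_1=h_2$; and then assigning to an affine map its isotopy class is an injective homomorphism ${\rm Aff}^+(X,u)\to{\rm Mod}(X)$, which is the second assertion. (Affine maps are orientation-preserving because $\det D(h)=1$, so they indeed give elements of ${\rm Mod}(X)$.)

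So let $h\in{\rm Aff}^+(X,u)$ be homotopic to the identity, and fix a representative $A=D(h)\in{\rm SL}(2,\mathbb{R})$. The first step is to rewrite the affine condition as holomorphy after deformation: reading $h$ in a chart $(U,A\circ z)$ of $u_A$ on the source and a chart $(V,w)$ of $u$ on the target, the coordinate expression $w\circ h\circ(A\circ z)^{-1}$ comes out to be a translation $\eta\mapsto\eta+c$, so $h\colon A\cdot(X,u)\to(X,u)$ is a biholomorphism. Hence in $T(X)$ one has $[A\cdot(X,u),{\rm id}]=[(X,u),h]$, the realizing conformal map being $h$ itself. If $h$ is homotopic to the identity this reads $[A\cdot(X,u),{\rm id}]=[(X,u),{\rm id}]$. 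Now $[A\cdot(X,u),{\rm id}]=\iota\bigl(\phi({\rm SO}(2)\cdot A)\bigr)=\iota\bigl(-\overline{A^{-1}(i)}\bigr)$ and $[(X,u),{\rm id}]=\iota(i)$, and $\iota$ is injective because it is an embedding; therefore $A^{-1}$ fixes $i\in\mathbb{H}$, i.e. $A\in{\rm SO}(2)$.

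It remains to exclude a nontrivial $h$ with $A\in{\rm SO}(2)$. For such $A$ the map ${\rm id}\colon(X,u)\to A\cdot(X,u)$ is itself a biholomorphism, since in charts it is the complex-linear map $A$; consequently $h$ is a conformal automorphism of the Riemann surface $(X,u)$ that is homotopic to the identity. As $3g-3+n>0$, the surface $(X,u)$ is hyperbolic of finite type, so this automorphism is the identity: lifting a homotopy from $h$ to ${\rm id}$ to the universal cover $\mathbb{H}$ gives a M\"obius transformation commuting with the whole uniformizing Fuchsian group $\Gamma$, and $\Gamma$ is non-elementary, so its centralizer in ${\rm PSL}(2,\mathbb{R})$ is trivial. (Equivalently, one invokes the standard fact that ${\rm Aut}(X,u)\to{\rm Mod}(X)$ is injective.) This completes the argument.

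I expect the step needing the most care to be the coordinate bookkeeping in the second paragraph: putting the matrix rather than its inverse on the correct side so that $h$ really is holomorphic from $A\cdot(X,u)$ to $(X,u)$, and recognizing that $A\in{\rm SO}(2)$ is precisely the condition under which ${\rm id}\colon(X,u)\to A\cdot(X,u)$ becomes holomorphic. Everything else is formal once the cited embedding property of $\iota$ is in hand. A variant bypassing $\iota$: when $A\notin{\rm SO}(2)$, compute in a $u$-chart that the Beltrami coefficient of $h$ equals a nonzero constant (of modulus less than $1$) times $\bar q/|q|$ for the holomorphic quadratic differential $q=dz^2$ associated with $u$ --- integrable exactly because the Euclidean area of $(X,u)$ is finite --- so that $h$ is a Teichm\"uller mapping of dilatation greater than $1$ and thus, by Teichm\"uller's uniqueness theorem, is not homotopic to the identity; the case $A\in{\rm SO}(2)$ is then handled as above.
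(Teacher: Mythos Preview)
Your argument is correct. The reduction to the case $h\simeq{\rm id}$ is standard, the computation that $h\colon A\cdot(X,u)\to(X,u)$ is holomorphic is clean and accurate, the use of the injectivity of $\iota$ to force $A\in{\rm SO}(2)$ is legitimate given Proposition~2.4, and the final appeal to the triviality of the centralizer of a non-elementary Fuchsian group disposes of the conformal case correctly under the standing hypothesis $3g-3+n>0$.

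As for comparison: the paper does not supply a proof of this proposition at all --- it is simply quoted from \cite{Veech89} --- so there is no in-text argument to weigh yours against. Your proof is in fact more detailed than what is typically written out, and the alternative route you sketch at the end (Beltrami coefficient $\propto \bar q/|q|$ with $q=dz^2$ integrable by the finite-area assumption, hence $h$ is a genuine Teichm\"uller map when $A\notin{\rm SO}(2)$) is closer in spirit to how Veech's original argument is usually summarized. Either version is acceptable here; the first has the virtue of using only the machinery already set up in Section~2.
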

Veech proved that Veech groups give the images of Teichm\"uller disks into the moduli spaces as follows.
\begin{theorem}[\cite{Veech89}, \cite{EarGar97}, \cite{HerSch07} ]
The affine group ${\rm Aff}^+(X, u)$ coincides with ${\rm Stab}(\Delta)$.
For $t \in \mathbb{H}$ and $h \in {\rm Aff}^+(X, u)$, we have $h_*(\iota(t))=\iota(RAR^{-1}(t))$.
Here, 
$A=D(h)$, 
$
R=\tiny{\left(
  \begin{array}{cccc}
   -1 & 0  \\
    0 & 1
  \end{array}
  \right)}$ and
$RAR^{-1}$ is a M\"obius transformation which acts on $\mathbb{H}$. 
\end{theorem}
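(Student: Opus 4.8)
The plan is to prove the displayed formula first and read off the inclusion ${\rm Aff}^+(X,u)\subseteq{\rm Stab}(\Delta)$ from it, and then to establish the reverse inclusion by upgrading a conformal map to an affine one using Teichm\"uller's uniqueness theorem.

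For the formula, fix $h\in{\rm Aff}^+(X,u)$, choose a lift $A\in{\rm SL}(2,\mathbb{R})$ of $D(h)$, and write $h$ in flat coordinates as $z\mapsto Az+c$. Given $t\in\mathbb{H}$, pick $B\in{\rm SL}(2,\mathbb{R})$ with $t=\phi({\rm SO}(2)B)=-\overline{B^{-1}(i)}$, so that $\iota(t)=[B\cdot(X,u),id]$. Since $h$ defines an element of ${\rm Mod}(X)$, one has $h_*(\iota(t))=[B\cdot(X,u),id\circ h^{-1}]$. The key point is that $h^{-1}$, \emph{regarded as a map} $BA^{-1}\cdot(X,u)\to B\cdot(X,u)$, is in local coordinates the translation $\zeta\mapsto\zeta-BA^{-1}c$, hence conformal; therefore $[B\cdot(X,u),h^{-1}]=[BA^{-1}\cdot(X,u),id]=\iota(\phi({\rm SO}(2)BA^{-1}))$. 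It then remains to simplify $\phi({\rm SO}(2)BA^{-1})=-\overline{AB^{-1}(i)}$: using $B^{-1}(i)=-\overline{t}$ and the fact that $A$ has real entries, so that $\overline{A(w)}=A(\overline{w})$ for the M\"obius action, this equals $-A(-t)$, which is exactly $RAR^{-1}(t)$ because $R$ acts on $\mathbb{H}$ as $z\mapsto-z$ and $R^{-1}=R$. Since $RAR^{-1}$ has positive determinant it preserves $\mathbb{H}$, whence $h_*\Delta=\iota(RAR^{-1}(\mathbb{H}))=\Delta$; this gives ${\rm Aff}^+(X,u)\subseteq{\rm Stab}(\Delta)$.

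For the reverse inclusion, let $\gamma\in{\rm Stab}(\Delta)$. As $\gamma_*$ is a biholomorphic isometry of $T(X)$ carrying $\Delta$ onto itself and $\iota$ is a holomorphic isometric embedding, $\iota^{-1}\circ\gamma_*\circ\iota$ is a holomorphic automorphism of $\mathbb{H}$, i.e. a M\"obius transformation $M\in{\rm PSL}(2,\mathbb{R})$; write $M=RAR^{-1}$ and fix a lift $A\in{\rm SL}(2,\mathbb{R})$. Evaluating at the base point $\iota(i)=[(X,u),id]$ gives both $\gamma_*(\iota(i))=[(X,u),\gamma^{-1}]$ and, from the cocycle computation above, $\gamma_*(\iota(i))=\iota(Mi)=[A^{-1}\cdot(X,u),id]$; comparing these two marked Riemann surfaces produces a conformal map $c:(X,u)\to A^{-1}\cdot(X,u)$ homotopic to $\gamma$, and $c$ is uniquely determined since a conformal automorphism of $X$ homotopic to the identity is the identity. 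It remains to see that $c$ is affine. I would do this by looking at the Teichm\"uller geodesics through $\iota(i)$ lying in $\Delta$: these are the $\iota$-images of hyperbolic geodesics through $i$, and their Teichm\"uller maps are the affine maps $id:(X,u)\to R_\theta\,{\rm diag}(e^s,e^{-s})\,R_\theta^{-1}\cdot(X,u)$, whose complex dilatations are constant multiples of $\overline{q_\theta}/|q_\theta|$ for the directional quadratic differentials $q_\theta=e^{2i\theta}\,dz^2$ of $(X,u)$. Their $\gamma_*$-images are the Teichm\"uller geodesics through $\iota(Mi)=[A^{-1}\cdot(X,u),id]$, whose Teichm\"uller maps are again affine, now with respect to $A^{-1}\cdot(X,u)$. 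Since $\gamma_*$ is an isometry, conjugating the first family of affine maps by the conformal maps realizing $\gamma_*$ yields extremal quasiconformal maps of the same maximal dilatation as the Teichm\"uller maps of the image geodesics, so by Teichm\"uller's uniqueness theorem they coincide with those affine maps; letting $\theta$ run over $[0,\pi)$ then forces $c$ to carry every directional foliation of $(X,u)$ to one of $A^{-1}\cdot(X,u)$, i.e. $c$ is affine. Composing $c$ with the affine map $id:A^{-1}\cdot(X,u)\to(X,u)$ gives an affine self-map of $(X,u)$ homotopic to $\gamma$, so $\gamma\in{\rm Aff}^+(X,u)$, and the formula already proved shows $D(\gamma)=A$.

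I expect the last upgrade to be the main obstacle: a holomorphic map between two flat surfaces need not be affine, and converting ``conformal and homotopic to $\gamma$'' into ``affine'' is precisely the point where one must exploit that the Teichm\"uller disk $\Delta$ remembers the entire rotated family of quadratic differentials of the flat structure. Everything else — the identification $A\cdot(X,u)=(X,u_A)$, the cocycle behaviour of $\phi$, and the bookkeeping of markings — is routine.
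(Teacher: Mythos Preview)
The paper does not prove this theorem; it is quoted from the literature with references \cite{Veech89}, \cite{EarGar97}, \cite{HerSch07} and used as a black box in Section~\ref{Preliminaries}. So there is no ``paper's own proof'' to compare against.

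That said, your argument is essentially the standard one and is correct in outline. The forward computation of $h_*(\iota(t))=\iota(RAR^{-1}(t))$ is clean: your observation that $h^{-1}$, read in the charts of $BA^{-1}\cdot(X,u)$ and $B\cdot(X,u)$, becomes a translation is exactly the right mechanism, and the bookkeeping $-\overline{AB^{-1}(i)}=-A(-t)=RAR^{-1}(t)$ is fine. For the reverse inclusion your strategy is also the standard one, but the last step is somewhat loosely phrased. A slightly more direct way to carry it out, avoiding the foliation language, is: the conformal map $c:(X,u)\to A^{-1}\cdot(X,u)$ pulls back the quadratic differential $(d(A^{-1}z))^2$ to a holomorphic quadratic differential $q'$ on $(X,u)$; since $c$ is homotopic to $\gamma$ and $\gamma_*$ carries the Teichm\"uller geodesics through $\iota(i)$ (with initial differentials $e^{2i\theta}dz^2$) onto those through $\iota(Mi)$ (with initial differentials $e^{2i\theta'}(d(A^{-1}z))^2$), Teichm\"uller uniqueness forces $q'$ to be a real multiple of $e^{2i\theta_0}dz^2$ for some $\theta_0$, i.e.\ $c$ is affine in the flat charts. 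This is what you are saying, just packaged so that the ``letting $\theta$ run'' step is replaced by a single application of uniqueness to identify the pulled-back differential. Either way the gap you flagged is real but closable, and your identification of it as the only non-routine point is accurate.
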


\begin{corollary}[\cite{Veech89}, \cite{EarGar97}, \cite{HerSch07} ]
The Veech group $\Gamma(X, u)$ is a Fuchsian group. 
Let $\overline{\Gamma}(X, u)=R\Gamma(X, u)R^{-1}$.
The orbifold $\mathbb{H}/\overline{\Gamma}(X, u)$ is holomorphically and locally isometrically embedded into the moduli space $M(g, n)$. 
The embedded orbifold is the image of the Teichm\"uller disk $\Delta$ into the moduli space $M(g, n)$. 
\end{corollary}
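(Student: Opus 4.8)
The plan is to read the corollary off from the preceding theorem together with the standard fact that ${\rm Mod}(X)$ acts properly discontinuously on $T(X)$. First I would transport the action of ${\rm Aff}^+(X,u)={\rm Stab}(\Delta)$ on the Teichm\"uller disk $\Delta=\iota(\mathbb{H})$ to an action on $\mathbb{H}$ through the holomorphic isometric embedding $\iota$. By the preceding theorem, $h_*(\iota(t))=\iota(RAR^{-1}(t))$ with $A=D(h)$, so the induced action of ${\rm Aff}^+(X,u)$ on $\mathbb{H}$ is precisely the action of the subgroup $\overline{\Gamma}(X,u)=R\Gamma(X,u)R^{-1}$ of ${\rm PSL}(2,\mathbb{R})$ by M\"obius transformations, and ${\rm Ker}(D)$ acts trivially. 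Since $\Delta$ is complete in the restricted Teichm\"uller metric, hence a closed ${\rm Stab}(\Delta)$-invariant subset of $T(X)$, and ${\rm Stab}(\Delta)\subset{\rm Mod}(X)$ acts properly discontinuously on $T(X)$, the action of ${\rm Stab}(\Delta)$ on $\Delta$ is properly discontinuous; passing to the quotient group $\Gamma(X,u)={\rm Aff}^+(X,u)/{\rm Ker}(D)$ gives a properly discontinuous action of $\overline{\Gamma}(X,u)$ on $\mathbb{H}$. A subgroup of ${\rm PSL}(2,\mathbb{R})$ acting properly discontinuously on $\mathbb{H}$ is discrete, so $\overline{\Gamma}(X,u)$, and therefore its conjugate $\Gamma(X,u)$, is a Fuchsian group.

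For the embedding statement I would consider the holomorphic projection $p\colon T(X)\to M(g,n)=T(X)/{\rm Mod}(X)$ and the composition $p\circ\iota\colon\mathbb{H}\to M(g,n)$. Using $h_*(\iota(t))=\iota(RAR^{-1}(t))$ together with $h\in{\rm Aff}^+(X,u)\subset{\rm Mod}(X)$, one sees that $p\circ\iota$ is constant on $\overline{\Gamma}(X,u)$-orbits, so it descends to a map of orbifolds $q\colon\mathbb{H}/\overline{\Gamma}(X,u)\to M(g,n)$. Because $\iota$ is a holomorphic isometric embedding and $p$ is holomorphic and a local isometry for the Teichm\"uller metric in the orbifold sense, $q$ is holomorphic and locally isometric. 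Its image is $p(\iota(\mathbb{H}))=p(\Delta)$, which is by definition the image of the Teichm\"uller disk $\Delta$ in $M(g,n)$; and since $p\circ\iota$ already factors through $\Delta/{\rm Stab}(\Delta)$ with ${\rm Stab}(\Delta)={\rm Aff}^+(X,u)$, the orbifold $\mathbb{H}/\overline{\Gamma}(X,u)$ is canonically identified with $\Delta/{\rm Stab}(\Delta)$, giving the last assertion.

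The genuinely essential input is the cited identification ${\rm Aff}^+(X,u)={\rm Stab}(\Delta)$ and the conjugation formula $h_*\circ\iota=\iota\circ(RAR^{-1})$ from the preceding theorem; everything else is formal. The one point that needs care, rather than an obstacle, is that ${\rm Ker}(D)$ need not be trivial, so one must quotient ${\rm Aff}^+(X,u)$ by ${\rm Ker}(D)$ before asserting a faithful, properly discontinuous action of $\overline{\Gamma}(X,u)$ on $\mathbb{H}$; once this is done, discreteness of $\overline{\Gamma}(X,u)$, the Fuchsian property, and the orbifold embedding all follow from the functoriality of the Teichm\"uller-metric quotient $T(X)\to M(g,n)$.
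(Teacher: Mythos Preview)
The paper does not give its own proof of this corollary; it is stated with citations to \cite{Veech89}, \cite{EarGar97}, \cite{HerSch07} and treated as an immediate consequence of the preceding theorem. Your derivation is precisely the intended one: pull the ${\rm Stab}(\Delta)$-action back through $\iota$ via the conjugation formula $h_*\circ\iota=\iota\circ(RAR^{-1})$, use proper discontinuity of ${\rm Mod}(X)$ on $T(X)$ to force ${\rm Ker}(D)$ finite and $\overline{\Gamma}(X,u)$ discrete, and then descend $p\circ\iota$ to the orbifold quotient. This is correct and is how the cited sources argue as well.
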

Veech holomorphic families of Riemann surfaces are given by such holomorphic and locally isometric embeddings $\Phi: \mathbb{H}/\overline{\Gamma}(X, u) \rightarrow M(g, n)$.
Let $\mathbb{H}^*$ be $\mathbb{H}$ with elliptic fixed points of $\overline{\Gamma}(X, u)$ removed. 
Since every point $t \in B=\mathbb{H}^*/\overline{\Gamma}(X, u)$ corresponds to a Riemann surface $X_t= \Phi(t)$,  we may construct a two-dimensional complex manifold $M$ by
\begin{eqnarray}
M=\left\{(t, z) : t \in B, z \in X_t=\Phi(t)\right\}. \nonumber
\end{eqnarray}
Let $\pi: M \rightarrow B$ be the projection $\pi(t, z)=t$. 
Then the triple $(M, \pi, B)$ is a holomorphic family of Riemann surfaces.
See \cite{Shinomiya12vhf}, for more details of the construction of the holomorphic families of Riemann surfaces.
\begin{definition}[Veech holomorphic families of Riemann surfaces]
A holomorphic family of Riemann surfaces constructed as above is called a Veech holomorphic family of Riemann surfaces of type $(g, n)$ over $B$.

\end{definition}

Through this paper, we assume that the Veech groups $\Gamma(X, u)$ of flat surfaces $(X, u)$ are Fuchsian groups of finite types.
If the Veech group $\Gamma(X, u)$  of a flat surface $(X, u)$ is of type $(p, k ; \nu_1, \cdots , \nu_k)$ ($\nu_i \in \{2,3, \cdots , \infty \}$), 
the base space $B$ of the corresponding Veech holomorphic family of Riemann surfaces is of type $(p, k)$.

In \cite{Shinomiya12vhf}, we characterize holomorphic sections of Veech holomorphic families of Riemann surfaces.
Let $(X, u)$ be a flat surface of type $(g, n)$ and $(M, \pi, B)$ the Veech holomorphic family of Riemann surfaces defined by $(X, u)$.
A holomorphic section of $(M, \pi, B)$ is a holomorphic map $s: B \rightarrow M$ such that $\pi \circ s= id_B$.
Let $\widetilde{\rho}: \mathbb{H} \rightarrow \mathbb{H}/\overline{\Gamma}(X, u)$ be the universal covering map.
For each $\widetilde{t} \in \mathbb{H}$, let 
$f_{\widetilde{t}} : X \rightarrow f_{\widetilde{t}}(X)$ be the Teichm\"uller map whose Beltrami coefficient $\mu_{\widetilde{t}}$ satisfies $\mu_{\widetilde{t}}=\frac{i-\widetilde{t}}{i+\widetilde{t}}\frac{d \bar{z}}{dz}$ for all $(U, z) \in u$. 
Then the Riemann surface $f_{\widetilde t}(X)$ coincides with $X_{\rho(\widetilde t)}=\pi^{-1}(\rho\left(\widetilde {t}\right))$. 
\begin{theorem}[\cite{Shinomiya12vhf}]\label{characterization}
Let $s: B \rightarrow M$ be a holomorphic section of the Veech holomorphic family $(M, \pi, B)$ of Riemann surfaces induced by $(X, u)$.
There exists $a \in (X, u)$ such that $s\circ \widetilde{\rho}(\widetilde{t})=(\widetilde{t}, f_{\widetilde{t}}(a))$ for all $\widetilde{t} \in \mathbb{H}^*$.
Moreover, the point $a \in (X, u)$ satisfies ${\rm Aff}^+(X, u)\{a \}= {\rm Ker}(D)\{ a\}$.
\end{theorem}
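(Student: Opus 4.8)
The plan is to turn the section into a family of $(g,n+1)$-pointed Riemann surfaces and then exploit the rigidity of Teichm\"uller disks. First I would pull the family $(M,\pi,B)$ back along $\widetilde{\rho}$. Over $\mathbb{H}^{*}$ this pullback is the tautological family $\widetilde{M}^{*}=\{(\widetilde{t},z):\widetilde{t}\in\mathbb{H}^{*},\ z\in f_{\widetilde{t}}(X)\}$, and $s$ pulls back to a holomorphic section $\widetilde{s}(\widetilde{t})=(\widetilde{t},\sigma(\widetilde{t}))$ with $\sigma(\widetilde{t})\in f_{\widetilde{t}}(X)$. Since the Teichm\"uller maps $f_{\widetilde{t}}$ are defined for every $\widetilde{t}\in\mathbb{H}$, this family extends to a holomorphic family over all of $\mathbb{H}$, and one checks that $\widetilde{s}$ extends over the elliptic fixed points of $\overline{\Gamma}(X,u)$ as well: near such a point the local monodromy of the family is of finite order, so a holomorphic section defined on a punctured neighbourhood cannot have an essential singularity there (alternatively this is part of the extension theory behind the Imayoshi--Shiga finiteness theorem). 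Thus $\widetilde{s}$ is a holomorphic section of a locally non-trivial family over $\mathbb{H}$. Because $\sigma(\widetilde{t})$ is always a genuine point of the punctured surface $f_{\widetilde{t}}(X)$ it avoids the $n$ punctures of the fibres, so marking $\sigma$ produces a locally non-trivial holomorphic family of Riemann surfaces of type $(g,n+1)$ over $\mathbb{H}$.

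The main step is to identify this marked family. It is classified by a holomorphic map $\Psi\colon\mathbb{H}\to T(X')$, where $X'$ has type $(g,n+1)$, lying over $\iota\colon\mathbb{H}\to T(X)$ via the forgetful projection $p\colon T(X')\to T(X)$ (over the simply-connected base $\mathbb{H}$ the family carries a canonical marking, giving a lift to Teichm\"uller space). I claim $\Psi$ is a holomorphic isometric embedding: $p$ is $1$-Lipschitz for the Teichm\"uller metrics and $\iota$ is isometric, so $d_{\mathbb{H}}(x,y)=d_{T(X)}(p\circ\Psi(x),p\circ\Psi(y))\le d_{T(X')}(\Psi(x),\Psi(y))$, while, since $\Psi$ is holomorphic and the Teichm\"uller metric coincides with the Kobayashi metric, $d_{T(X')}(\Psi(x),\Psi(y))\le d_{\mathbb{H}}(x,y)$; hence equality. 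By the characterisation of holomorphic isometric embeddings of $\mathbb{H}$ into a Teichm\"uller space as Teichm\"uller disks of flat surfaces (the Proposition of Earle--Gardiner and Herrlich--Schmith\"usen recalled in Section~\ref{Preliminaries}, applied to $T(X')$), $\Psi$ is the Teichm\"uller disk of a flat surface $(X',u')$; let $q'$ be its quadratic differential and $q$ that of $(X,u)$. Equality in the two estimates forces $p$ to be an isometry along $\Psi(\mathbb{H})$, so its differential carries the Teichm\"uller direction $\bar{q}'/|q'|$ of $\Psi$ to the Teichm\"uller direction $\bar{q}/|q|$ of $\iota$; therefore $q'/q$ agrees with $|q'|/|q|$ on $X$ and is a positive real-valued meromorphic function on $\overline{X}$, hence a positive constant. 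After rescaling $u'$ we may assume $q'=q$, so $(X',u')$ is $(X,u)$ with one extra marked point $a\in(X,u)$. Matching the standard parametrisation of the Teichm\"uller disk of $(X',u')$ with $\Psi$ now yields $\sigma(\widetilde{t})=f_{\widetilde{t}}(a)$, that is, $s\circ\widetilde{\rho}(\widetilde{t})=(\widetilde{t},f_{\widetilde{t}}(a))$ for all $\widetilde{t}\in\mathbb{H}^{*}$, with one and the same point $a$.

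It remains to derive ${\rm Aff}^{+}(X,u)\{a\}={\rm Ker}(D)\{a\}$ from the fact that $\widetilde{s}$ descends to $B=\mathbb{H}^{*}/\overline{\Gamma}(X,u)$. For $\gamma=RAR^{-1}\in\overline{\Gamma}(X,u)$ with $A\in\Gamma(X,u)$, the theorem of Veech recalled in Section~\ref{Preliminaries} shows that the fibres of the family over $\widetilde{t}$ and over $\gamma\widetilde{t}$, both isomorphic to the same Riemann surface, are glued inside $M$ by a biholomorphism in the homotopy class of $f_{\gamma\widetilde{t}}\circ h\circ f_{\widetilde{t}}^{-1}$ for an affine map $h$ with $D(h)=A$. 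Such an $h$ is determined by $A$ only up to ${\rm Ker}(D)$, and affine maps with trivial derivative act on each fibre $f_{\widetilde{t}}(X)$ as genuine biholomorphisms; hence the gluing is canonical only up to these ${\rm Ker}(D)$-automorphisms. Consequently $\widetilde{s}(\widetilde{t})=(\widetilde{t},f_{\widetilde{t}}(a))$ descends to $B$ if and only if every $A\in\Gamma(X,u)$ admits an affine lift fixing $a$, i.e. $D\bigl({\rm Stab}_{{\rm Aff}^{+}(X,u)}(a)\bigr)=\Gamma(X,u)$; and, because ${\rm Ker}(D)$ is normal in ${\rm Aff}^{+}(X,u)$, this last condition is equivalent to ${\rm Aff}^{+}(X,u)\{a\}={\rm Ker}(D)\{a\}$, completing the proof.

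The hard part is the middle paragraph: showing that marking a holomorphic section of a Teichm\"uller family again produces a Teichm\"uller disk (the isometry property of $\Psi$), and that compatibility with the forgetful map forces $u'$ to be $u$ with an added marked point. The extension of $\widetilde{s}$ over the elliptic fixed points and the ${\rm Ker}(D)$-bookkeeping in the last paragraph are comparatively routine, though the conventions in the latter require care.
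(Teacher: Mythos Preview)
The paper does not contain a proof of this theorem: it is stated with the attribution \cite{Shinomiya12vhf} and simply quoted from the author's earlier work, so there is no argument here to compare yours against.

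For what it is worth, your strategy---lifting the section to $\mathbb{H}$, marking it to obtain a holomorphic map $\Psi$ into $T(X')$ with $X'$ of type $(g,n+1)$, showing $\Psi$ is a Teichm\"uller disk via the squeeze between the Kobayashi/Teichm\"uller contraction of $\Psi$ and the $1$-Lipschitz forgetful map $p$, and then identifying the underlying quadratic differential with that of $(X,u)$---is a recognised approach to results of this kind (compare the rigidity arguments for sections of Teichm\"uller curves in the work of Kra and of M\"oller). The step where you pass from ``$p$ is an isometry along $\Psi(\mathbb{H})$'' to ``$q'/q$ is a positive constant'' is the one that would most benefit from a fuller justification: it rests on the equality case of Teichm\"uller's uniqueness/contraction theorem for the forgetful map, and one should spell out why the initial directions (hence the quadratic differentials) are forced to agree rather than merely have the same norm. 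The final paragraph, deriving ${\rm Aff}^{+}(X,u)\{a\}={\rm Ker}(D)\{a\}$ from the descent condition, is correct in outline; just be careful that the equivalence you state (``every $A\in\Gamma(X,u)$ admits an affine lift fixing $a$'' $\Leftrightarrow$ ${\rm Aff}^{+}(X,u)\{a\}={\rm Ker}(D)\{a\}$) uses that ${\rm Aff}^{+}(X,u)$ surjects onto $\Gamma(X,u)$, which is immediate from the definition of the Veech group.
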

Let $\Gamma$ be a finite index subgroup of $\Gamma(X, u)$ and $\overline{\Gamma}=R\Gamma R^{-1}$.
Let $\rho: \mathbb{H}/\overline{\Gamma} \rightarrow \mathbb{H}/\overline{\Gamma}(X, u)$ be the covering map.
The holomorphic and locally isometric map $\Phi \circ \rho : \mathbb{H}/\overline{\Gamma}  \rightarrow M(g, n)$ constructs a holomorphic family of Riemann surfaces. 
Let $\mathbb{H}_{\overline{\Gamma}}^*$ be the upper half-plane $\mathbb{H}$ with elliptic fixed points of $\overline{\Gamma}$ removed. 
We set $B^\prime=\mathbb{H}_{\overline{\Gamma}}^*/\overline{\Gamma}$, $M^\prime= \left\{(t, z): t \in B^\prime, z \in X_t= \Phi \circ \rho(t)\right\}$, and $\pi^\prime: M \rightarrow B^\prime$ to be a projection $\pi^\prime(t, z)=t$.
Then the triple $(M^\prime, \pi^\prime, B^\prime)$ is a holomorphic family of Riemann surfaces of type $(g, n)$ over $B^\prime$. 

\begin{corollary}[\cite{Shinomiya12vhf}]
A holomorphic section $s^\prime: B^\prime \rightarrow M^\prime$ of the holomorphic family $(M^\prime, \pi^\prime, B^\prime)$ of Riemann surfaces as above  corresponds to a point $a \in X$ satisfying $D^{-1}(\Gamma)\{a\}={\rm Ker}(D)\{a\}$.
\end{corollary}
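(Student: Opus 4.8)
The plan is to deduce this from Theorem~\ref{characterization}, observing that $(M^\prime, \pi^\prime, B^\prime)$ is obtained from the single flat surface $(X, u)$ by exactly the construction of a Veech holomorphic family, with the Fuchsian group $\overline{\Gamma}$ and the subgroup $D^{-1}(\Gamma) = \{ h \in {\rm Aff}^+(X, u) : D(h) \in \Gamma \}$ of ${\rm Aff}^+(X, u)$ playing the roles of $\overline{\Gamma}(X, u)$ and ${\rm Aff}^+(X, u)$. First I would record the elementary facts about $D^{-1}(\Gamma)$: it contains ${\rm Ker}(D)$, has finite index $[\Gamma(X, u) : \Gamma]$ in ${\rm Aff}^+(X, u)$, satisfies $D(D^{-1}(\Gamma)) = \Gamma$, and, under Veech's identification $h \mapsto RD(h)R^{-1}$, is precisely the subgroup of ${\rm Stab}(\Delta) = {\rm Aff}^+(X, u)$ whose action on $\Delta = \mathbb{H}$ is carried by $\overline{\Gamma}$; also $B^\prime$ is of finite type because $\Gamma$ is. The Teichm\"uller maps $f_{\widetilde{t}}$ coordinatizing the fibres of $M$ restrict to coordinates on the fibres of $M^\prime$.

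Next I would pass to the covering $\mathbb{H}_{\overline{\Gamma}}^* \to B^\prime$, which is regular with deck group $\overline{\Gamma}$ acting freely. Given a holomorphic section $s^\prime : B^\prime \to M^\prime$, remove from $B^\prime$ the finitely many points lying over the cone points of $\mathbb{H}/\overline{\Gamma}(X, u)$ to obtain an open Riemann surface $B_0^\prime$, still of finite type, over which $(M^\prime, \pi^\prime, B^\prime)$ is the pull-back of the Veech family $(M, \pi, B)$ along the finite covering $\rho : B_0^\prime \to B$. The preimage of $B_0^\prime$ in $\mathbb{H}_{\overline{\Gamma}}^*$ is exactly $\mathbb{H}^*$ (since $\overline{\Gamma} \le \overline{\Gamma}(X, u)$), the restriction $\mathbb{H}^* \to B_0^\prime$ is a regular covering with deck group $\overline{\Gamma}$, and its composition with $\rho$ is $\widetilde{\rho} : \mathbb{H}^* \to B$. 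Lifting $s^\prime|_{B_0^\prime}$ through $\mathbb{H}^* \to B_0^\prime$ and composing the fibre component with $f_{\widetilde{t}}^{-1}$ yields a holomorphic map $\sigma : \mathbb{H}^* \to X$ which is $\overline{\Gamma}$-equivariant, in the sense that $\sigma(RD(h)R^{-1}(\widetilde{t})) = h(\sigma(\widetilde{t}))$ for all $h \in D^{-1}(\Gamma)$, by Veech's relation $h_*(\iota(t)) = \iota(RD(h)R^{-1}(t))$ (which says that $f_{RD(h)R^{-1}(\widetilde{t})} \circ h^{-1}$ and $f_{\widetilde{t}}$ represent the same marked Riemann surface).

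Now I would run the argument of Theorem~\ref{characterization}: the part showing that a holomorphic section over $\mathbb{H}^*$ of such a family descends, via the $f_{\widetilde{t}}$'s, to a \emph{constant} $a \in X$ uses only that the relevant quotient (here $B_0^\prime$) is of finite type together with the Teichm\"uller-deformation structure, so it applies and gives $\sigma \equiv a$, i.e. the lift of $s^\prime$ through $\mathbb{H}^* \to B_0^\prime$ has the form $\widetilde{t} \mapsto (\widetilde{t}, f_{\widetilde{t}}(a))$; the equivariance of $\sigma$ then reads $h(a) \in {\rm Ker}(D)\{a\}$ for every $h \in D^{-1}(\Gamma)$, that is $D^{-1}(\Gamma)\{a\} \subseteq {\rm Ker}(D)\{a\}$, and the reverse inclusion is trivial. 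Conversely, if $a \in X$ satisfies $D^{-1}(\Gamma)\{a\} = {\rm Ker}(D)\{a\}$, then $\widetilde{t} \mapsto (\widetilde{t}, f_{\widetilde{t}}(a))$ is defined for all $\widetilde{t} \in \mathbb{H}_{\overline{\Gamma}}^*$ and, by the same equivariance computation read backwards, descends to a holomorphic section of $(M^\prime, \pi^\prime, B^\prime)$. I expect the main obstacle to be bookkeeping rather than a new idea: one must keep straight the two different sets of removed points, $\mathbb{H}^*$ (cut out by the cone points of $\overline{\Gamma}(X, u)$) versus $\mathbb{H}_{\overline{\Gamma}}^*$ (cut out by those of $\overline{\Gamma}$), check that the section genuinely extends holomorphically from $B_0^\prime$ across the extra points of $B^\prime$, and confirm that the proof of Theorem~\ref{characterization} uses no property of $\Gamma(X, u)$ or ${\rm Aff}^+(X, u)$ beyond those shared by $\Gamma$ and $D^{-1}(\Gamma)$.
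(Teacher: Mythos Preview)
The paper does not prove this corollary at all; it is stated with a citation to \cite{Shinomiya12vhf} and no argument is given here. So there is no ``paper's own proof'' to compare against.

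That said, your proposal is the natural deduction from Theorem~\ref{characterization} and is essentially correct: one observes that the construction of $(M', \pi', B')$ is formally identical to that of the Veech family, with $\Gamma$ in place of $\Gamma(X,u)$ and $D^{-1}(\Gamma)$ in place of ${\rm Aff}^+(X,u)$, and then reruns the argument. Your care in distinguishing $\mathbb{H}^*$ from $\mathbb{H}_{\overline{\Gamma}}^*$ and in restricting to $B_0'$ so as to sit over the original base $B$ is exactly the right bookkeeping. The one point you flag as a potential obstacle---that the constancy argument inside Theorem~\ref{characterization} uses only finite-type-ness of the quotient and the Teichm\"uller-deformation structure, not any special feature of the full Veech group---is indeed true in \cite{Shinomiya12vhf}, so the transplant goes through. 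Your inclusion of the converse direction goes slightly beyond what the statement asserts (it, like Theorem~\ref{characterization}, is phrased one-directionally), but it is correct and harmless.
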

In \cite{Shinomiya12vhf}, we estimate the number of points $a \in X$ satisfying ${\rm Aff}^+(X, u)\{a \}= {\rm Ker}(D)\{ a\}$ in case that $(X, u)$ has a simple Jenkins-Strebel direction.
The estimation gives upper bounds of the numbers of holomorphic sections of Veech holomorphic families of Riemann surfaces induced by such flat surfaces.
\begin{theorem}[\cite{Shinomiya12vhf}] \label{upperbound1}
Let $(X, u)$ be a flat surface of type $(g, n)$ with a simple Jenkins-Strebel direction.
Let $(M, \pi, B)$ be the Veech holomorphic family of Riemann surfaces induced by $(X,u)$.
Suppose that the base space $B$ is of type $(p, k)$.
Then the number of holomorphic sections of  $(M, \pi, B)$ is at most
\begin{eqnarray}
32\pi (2p-2+k)(3g-3+n)^2(3g-2+n)-2g+2. \nonumber
\end{eqnarray}
\end{theorem}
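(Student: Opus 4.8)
The plan is to use Theorem~\ref{characterization} to reduce the statement to a counting problem on the flat surface $(X,u)$, and then to bound the count in terms of topological data only.

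First I would record that a section is determined by its associated point. By Theorem~\ref{characterization}, a holomorphic section $s$ of $(M,\pi,B)$ satisfies $s\circ\widetilde\rho(\widetilde t)=(\widetilde t, f_{\widetilde t}(a))$ for a point $a=a(s)\in(X,u)$ with $\mathrm{Aff}^+(X,u)\{a\}=\mathrm{Ker}(D)\{a\}$; fixing any $\widetilde t_0\in\mathbb H^*$, one recovers $f_{\widetilde t_0}(a)$ from $s$, and since the Teichm\"uller map $f_{\widetilde t_0}$ is injective this determines $a$. So it suffices to bound the number of $a\in(X,u)$ with $\mathrm{Aff}^+(X,u)\{a\}=\mathrm{Ker}(D)\{a\}$. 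Since $\mathrm{Ker}(D)$ is finite (an affine map of trivial derivative is locally a translation, so $\mathrm{Ker}(D)$ embeds into the permutations of $C(X,u)$ up to a subgroup of order at most two), this condition is equivalent to saying that the stabilizer $\mathrm{Aff}^+(X,u)_a$ surjects onto the Veech group $\Gamma(X,u)$ under $D$. As $\Gamma(X,u)$ is an infinite, non-elementary Fuchsian lattice, $\mathrm{Aff}^+(X,u)_a$ must then contain affine maps fixing $a$ with parabolic derivative, and affine maps fixing $a$ with derivative not fixing a prescribed boundary point of $\mathbb H$.

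Now I would use the hypothesis. Let $\theta$ be the simple Jenkins--Strebel direction, so that $(X,u)$ is a single cylinder $R$ outside its $\theta$-saddle connections; the full Dehn twist of $R$ is affine with parabolic derivative fixing $\theta$, so the corresponding boundary point $[\theta]$ is a cusp of $\Gamma(X,u)$. Because the stabilizer surjects onto $\Gamma(X,u)$, there is $\tau_0\in\mathrm{Aff}^+(X,u)_a$ realizing a generator of the maximal parabolic subgroup fixing $[\theta]$; writing $\tau_0$ on the cylinder as a fractional shear composed with an element of $\mathrm{Ker}(D)$, one checks directly that its fixed locus is contained in the $\theta$-critical graph $\Sigma_\theta$ (the union of all $\theta$-saddle connections) together with at most one $\theta$-closed geodesic. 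Hence $a$ lies on this one-dimensional set, and the number of its linear pieces is controlled topologically: since the complement of $\Sigma_\theta$ in $\overline X$ is, up to the punctures interior to $R$, the single open annulus $R$, an Euler-characteristic count bounds the number of $\theta$-saddle connections by $|C(X,u)|+2g-2$, while $|C(X,u)|$ is itself bounded in terms of $(g,n)$ by Gauss--Bonnet for the flat metric (the $-2g+2$ in the statement is the residue of this Euler correction).

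To pin $a$ down to a finite set, I would then use a second element $\phi\in\mathrm{Aff}^+(X,u)_a$ whose derivative does not fix $[\theta]$, which exists because $\Gamma(X,u)$ is non-elementary; then $\phi$ carries $\Sigma_\theta$ and the distinguished closed geodesic to the analogous objects in the direction $\theta'=D(\phi)\cdot\theta$, which is again completely periodic (the boundary point $D(\phi)\cdot[\theta]$ is again a cusp), so $a$ is a transverse intersection point of one-dimensional objects in the two directions $\theta\neq\theta'$, hence among finitely many points. The main obstacle is to make this count effective with the constants in the statement. Intrinsically the number of such intersection points is a metric quantity --- governed by saddle-connection lengths, the modulus of $R$, and the operator norm of $D(\phi)$ --- so the point is to arrange, after normalising $(X,u)$ by $\mathrm{SL}(2,\mathbb R)$, that all of this is dominated by topological data: the covolume $2\pi(2p-2+k)$ of the Veech group must control the auxiliary element $\phi$ (e.g.\ by taking $D(\phi)$ among generators of bounded translation length), which yields the factor $32\pi(2p-2+k)$, while the numbers of saddle connections in the two directions and the bound $m<3g-3+n$ on the number of cylinders in a Jenkins--Strebel decomposition produce the combinatorial factor $(3g-3+n)^2(3g-2+n)$. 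This effective estimate --- not the structural reduction, which is essentially Theorem~\ref{characterization} together with the analysis of the parabolic in the simple direction --- is where the work lies, and it is exactly here that having one cylinder rather than a general decomposition makes the explicit constants accessible.
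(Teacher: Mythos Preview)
Your strategy is the same one the paper (and its predecessor \cite{Shinomiya12vhf}) uses: restrict the candidate points to a one-dimensional horizontal locus using a parabolic affine map, then cut this locus down to a finite set by intersecting with the image under an affine map whose derivative does not fix the horizontal direction, and finally make the intersection count effective in terms of the signature of $\Gamma(X,u)$ and the topology of $X$. Note, however, that the present paper does not actually prove Theorem~\ref{upperbound1}; it quotes it and proves the general Theorem~\ref{mainthm1} by the same method, so the relevant comparison is with Section~\ref{main}.

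There is one genuine gap in your reduction. You choose $\tau_0$ and $\phi$ in the stabiliser of the particular point $a$, so both depend on $a$: different sections give different lifts of $B$ and of $A$, and hence different ``distinguished closed geodesics'' and different second directions $\theta'$. As written, the intersection you produce varies with $a$, so it does not give a single finite set containing all candidates. The paper avoids this by first passing to the quotient $Y=X/\mathrm{Ker}(D)$, on which $\Gamma(X,u)$ itself acts; there one fixes once and for all the primitive parabolic $h_B'$ and an element $h_A'$ with $c_1$ minimal, defines a single set $\mathrm{Cross}(A)$ of intersection points, and shows $\varphi(S(X,u))\subset\mathrm{Cross}(A)$ (Lemma~\ref{lemma1}). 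Pulling back costs a factor $\sharp\mathrm{Ker}(D)\le 4(3g-3+n)$ (Lemma~\ref{ker}). Your argument can be repaired the same way, or equivalently by summing over the $\sharp\mathrm{Ker}(D)$ lifts of $B$ and of $A$.

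On the effective step you are candid that this is where the work lies, but you do not name the decisive ingredient. It is not a translation-length bound on generators; it is the Shimizu-type estimate of Theorem~\ref{fuchsian}: after normalising so that the primitive horizontal parabolic is $\left[\begin{smallmatrix}1&1\\0&1\end{smallmatrix}\right]$, there exists $\left[\begin{smallmatrix}a&b\\c&d\end{smallmatrix}\right]\in\Gamma(X,u)$ with $1\le |c|<\mathrm{Area}(\mathbb H/\Gamma(X,u))$, equivalently $b_0c_1\le \mathrm{Area}(\mathbb H/\Gamma(X,u))\le 2\pi(2p-2+k)$. This is exactly what converts the transverse-intersection count (which is governed by $c_1$ via Lemma~\ref{lemma3} and Lemma~\ref{lemma4}) and the number of horizontal fixed leaves (governed by $b_0/\mathrm{mod}(R)$ via Lemma~\ref{lemma2}) into the factor $32\pi(2p-2+k)$. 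Without this lemma there is no mechanism in your outline to bound the metric quantities by the signature of the base.
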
 
In Section \ref{main}, we give upper bounds of the numbers of holomorphic sections of all Veech holomorphic families of Riemann surfaces by extending the proof of Theorem \ref{upperbound1} in \cite{Shinomiya12vhf}.
We also apply the following theorem in \cite{Shinomiya12vhf}.
\begin{theorem} [\cite{Shinomiya12vhf}]\label{fuchsian}
Let $\Gamma < {\rm PSL}(2, \mathbb{R})$ be a Fuchsian group of type $(p,k: \nu_1, \cdots, \nu_k)$
$(\nu_i \in \{2, 3, \cdots, \infty\})$.
Let $k_0$ be the number of $\nu_i$'s which are equal to $\infty$.
Assume that  $\Gamma$ contains $ \tiny \left[\left(
  \begin{array}{cccc}
    1 & 1 \\
    0 & 1
  \end{array}
  \right)
  \right]$ and it is primitive.
Then there exists  
  $ \tiny \left[\left(
  \begin{array}{cccc}
    a & b \\
    c & d
  \end{array}
  \right)
  \right] \in \Gamma$
  such that 
\begin{equation}
\displaystyle
1\leq |c| <{\rm Area}(\mathbb{H}/\Gamma)-k_0+1.
\nonumber
\end{equation}
Here, ${\rm Area}\left(\mathbb{H}/\Gamma(X, u) \right) $ is the hyperbolic area of  the orbifold $\mathbb{H}/\Gamma$.
\end{theorem}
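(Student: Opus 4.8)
The plan is to reinterpret $c_0:=\min\{\,|c| : \left(\begin{smallmatrix} a & b \\ c & d\end{smallmatrix}\right)\in\Gamma,\ c\neq 0\,\}$ geometrically and to exhibit $k_0$ pairwise disjoint embedded horocyclic cusp neighborhoods in $\mathbb{H}/\Gamma$ whose areas add up to $c_0+(k_0-1)$; since finitely many disjoint (annular) cusp neighborhoods cannot cover the connected finite--area orbifold $\mathbb{H}/\Gamma$ (which has negative orbifold Euler characteristic), their complement has positive area, forcing ${\rm Area}(\mathbb{H}/\Gamma)>c_0+k_0-1$, i.e.\ $c_0<{\rm Area}(\mathbb{H}/\Gamma)-k_0+1$. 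First I would check that $c_0$ is well defined: if every element had $c=0$ then $\Gamma$ would stabilize $\infty$ and $\mathbb{H}/\Gamma$ would have infinite area, and the minimum is attained because only finitely many elements have $|c|$ below a given bound (bounded $|c|$ means isometric circles of radius bounded below, and these are locally finite and, by Shimizu's lemma, of radius at most $1$). Shimizu's lemma applied to the primitive parabolic $\left(\begin{smallmatrix}1&1\\0&1\end{smallmatrix}\right)$ gives $c_0\geq 1$ and also that $\{{\rm Im}>1/c_0\}$ is precisely invariant under $\langle\left(\begin{smallmatrix}1&1\\0&1\end{smallmatrix}\right)\rangle$ (any $\gamma$ with $c\neq 0$ sends it to a horoball of Euclidean diameter $c_0/c^2\leq 1/c_0$); its quotient is an embedded cusp neighborhood of $[\infty]$ of area $\int_0^1\!\int_{1/c_0}^\infty y^{-2}\,dy\,dx=c_0$.

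Next, for each of the remaining $k_0-1$ cusps choose a representative $q\in\mathbb{R}$ with primitive parabolic $\eta_q\in\Gamma$. By Shimizu's lemma in the group moving $q$ to $\infty$, the \emph{natural} horoball $H_q$ at $q$ --- the one on whose bounding horocycle $\eta_q$ acts by a unit hyperbolic translation --- is precisely invariant under $\langle\eta_q\rangle$, has quotient area $1$, and has Euclidean diameter $1/|c_{\eta_q}|$; moreover $|c_{\eta_q}|\geq c_0$ since $\eta_q$ has nonzero lower--left entry, so $H_q$ has diameter $\leq 1/c_0$. I would then verify that the family consisting of the $\Gamma$--orbit of $\{{\rm Im}>1/c_0\}$ together with the $\Gamma$--orbits of the $H_q$'s is pairwise disjoint up to tangency, using the elementary criterion that horoballs at $x,y\in\mathbb{R}$ of diameters $d_x,d_y$ are disjoint iff $(x-y)^2\geq d_xd_y$. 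Two checks are needed: (a) natural horoballs at inequivalent cusps are disjoint, which follows by conjugating one of the cusps to $\infty$ and invoking Shimizu's precise invariance of $\{{\rm Im}>1\}$; and (b) each $H_q$ is disjoint from every translate $\gamma(\{{\rm Im}>1/c_0\})$, a horoball at $\gamma(\infty)=a/c$ of diameter $c_0/c^2$, which reduces to the inequality $(a-cq)^2\geq c_0/|c_{\eta_q}|$. The key observation here is that $\gamma^{-1}$ carries $H_q$ onto the natural horoball $H_{q'}$ at the cusp $q':=\gamma^{-1}(q)$, which is inequivalent to $\infty$, hence $|c_{\eta_{q'}}|\geq c_0$; the diameter--transformation rule then gives $(a-cq)^2=|c_{\eta_{q'}}|/|c_{\eta_q}|\geq c_0/|c_{\eta_q}|$, exactly as required.

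Granting these disjointness statements, the quotients of this horoball family are $k_0$ pairwise disjoint embedded cusp neighborhoods of $\mathbb{H}/\Gamma$ --- one of area $c_0$ and $k_0-1$ of area $1$ --- so ${\rm Area}(\mathbb{H}/\Gamma)\geq c_0+k_0-1$, with strict inequality by the covering argument above. Hence any $\left(\begin{smallmatrix} a & b \\ c & d\end{smallmatrix}\right)\in\Gamma$ realizing $|c|=c_0$ satisfies $1\leq|c|<{\rm Area}(\mathbb{H}/\Gamma)-k_0+1$.

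The main obstacle is step (b): recognizing the quantity $(a-cq)^2$ as the ratio $|c_{\eta_{q'}}|/|c_{\eta_q}|$ through the action of $\gamma^{-1}$ on natural horoballs, which is precisely what makes the minimality of $c_0$ control the geometry at the other cusps. The remaining ingredients --- the exact diameter and quotient area of a natural horoball, keeping track of tangency versus strict disjointness, and the final non--covering argument --- are routine. One also sees that equality $(a-cq)^2=c_0/|c_{\eta_q}|$ can only occur when some cusp equivalent to $q$ has its primitive parabolic attaining the minimal lower--left entry, which explains the near--sharpness of the bound.
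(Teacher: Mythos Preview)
Your proposal is correct and follows essentially the same approach as the paper: both construct $k_0$ pairwise disjoint embedded cusp neighborhoods---one of area $c_0$ (equivalently ${\rm Area}(\mathbb{H}/\Gamma)-k_0+1$ under the paper's contradiction hypothesis) at the cusp $\infty$ and the remaining $k_0-1$ of area $1$---and compare their total area to ${\rm Area}(\mathbb{H}/\Gamma)$. The paper argues by contradiction and invokes the Ford fundamental region to handle the disjointness you verify in step~(b), but the underlying geometry is identical.
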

The first inequality is the consequence of the Shimizu lemma (c.f. \cite{ImaTan92}).
\begin{lemma}[The Shimizu lemma]
Let $\Gamma < {\rm PSL}(2, \mathbb{R})$ be a Fuchsian group.
Assume that
$ \tiny \left[\left(
  \begin{array}{cccc}
    1 & 1 \\
    0 & 1
  \end{array}
  \right)
  \right] \in \Gamma$ and it is primitive. 
 Then $c=0$ or $|c| \geq 1$  for all  $ \tiny \left[\left(
  \begin{array}{cccc}
    a & b \\
    c & d
  \end{array}
  \right)
  \right] \in \Gamma$. 
\end{lemma}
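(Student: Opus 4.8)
The plan is to argue by contradiction with a classical iteration (``ping--pong'') device: if $\Gamma$ contained a matrix whose lower-left entry had modulus strictly between $0$ and $1$, I would manufacture from it an infinite sequence of \emph{distinct} elements of $\Gamma$ converging to the identity, contradicting discreteness.

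Concretely, write $S=\begin{pmatrix}1&1\\0&1\end{pmatrix}$ and suppose $T=\begin{pmatrix}a&b\\c&d\end{pmatrix}\in\Gamma$ with $0<|c|<1$; I would fix $\mathrm{SL}(2,\mathbb{R})$-lifts (harmless, since everything below is unchanged under negating a matrix), set $T_{1}:=T$ and $T_{n+1}:=T_{n}ST_{n}^{-1}\in\Gamma$, and compute the entries of $T_{n}=\begin{pmatrix}a_{n}&b_{n}\\c_{n}&d_{n}\end{pmatrix}$. A short matrix computation, using $\det T_{n}=1$, gives
\[
T_{n+1}=\begin{pmatrix}1-a_{n}c_{n}&a_{n}^{2}\\-c_{n}^{2}&1+a_{n}c_{n}\end{pmatrix},
\]
so in particular $c_{n+1}=-c_{n}^{2}$, whence $|c_{n}|=|c|^{2^{n-1}}$; thus every $c_{n}$ is nonzero, the moduli $|c_{n}|$ are strictly decreasing, and $c_{n}\to0$ superexponentially.

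Next I would push the remaining recursions $a_{n+1}=1-a_{n}c_{n}$, $b_{n+1}=a_{n}^{2}$, $d_{n+1}=1+a_{n}c_{n}$ to show $T_{n}\to S$. The one point requiring care is the boundedness of $|a_{n}|$: from $|a_{n+1}|\le 1+|a_{n}|\,|c|^{2^{n-1}}$ and the extremely fast decay of $|c|^{2^{n-1}}$, unrolling the inequality bounds $\sup_{n}|a_{n}|$ by a quantity comparable to $\sum_{k\ge0}|c|^{k}<\infty$. Granting that, $a_{n}c_{n}\to0$, so $a_{n},d_{n}\to1$, hence $b_{n+1}=a_{n}^{2}\to1$, and together with $c_{n}\to0$ this gives $T_{n}\to S$ in $\mathrm{SL}(2,\mathbb{R})$.

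Finally I would conclude with $U_{n}:=T_{n}S^{-1}\in\Gamma$: these converge to $I$, are pairwise distinct (their lower-left entries are the distinct numbers $c_{n}$), and none equals $I$ (the lower-left entry $c_{n}\neq0$); this contradicts the discreteness of the Fuchsian group $\Gamma$, so in fact $c=0$ or $|c|\ge1$ for every element, as claimed. The main obstacle is genuinely just the bookkeeping keeping $|a_{n}|$ (hence $|b_{n}|,|d_{n}|$) bounded so that $T_{n}$ converges; everything else is the identity $c_{n+1}=-c_{n}^{2}$ together with discreteness. I would also note that primitivity of $S$ is not actually needed for this argument---it is stated only because it makes the constant $1$ sharp, namely when $S$ generates the maximal parabolic subgroup fixing $\infty$.
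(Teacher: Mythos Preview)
Your argument is correct and is exactly the classical Shimizu--Leutbecher iteration. The recursion
\[
T_{n+1}=T_nST_n^{-1}=\begin{pmatrix}1-a_nc_n & a_n^{2}\\ -c_n^{2} & 1+a_nc_n\end{pmatrix}
\]
is computed correctly, the key identity $|c_{n}|=|c|^{2^{n-1}}$ follows, and the boundedness of $|a_n|$ is straightforward: with $\rho=|c|<1$ and $M=\max\{|a_1|,(1-\rho)^{-1}\}$ one checks inductively that $|a_n|\le M$ since $|a_{n+1}|\le 1+M\rho\le M$. Hence $T_n\to S$, the elements $U_n=T_nS^{-1}$ are pairwise distinct (their $(2,1)$-entries are the distinct nonzero numbers $c_n$) and accumulate at the identity, contradicting discreteness. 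Your remark that primitivity of $S$ is irrelevant to the argument, and only serves to make the constant $1$ optimal, is also correct.

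There is nothing to compare with in the paper: the Shimizu lemma is stated there without proof, with a reference to Imayoshi--Taniguchi. The paper only \emph{uses} it (together with a horodisk area count) in the proof of the subsequent Theorem on the existence of an element with $1\le|c|<\mathrm{Area}(\mathbb{H}/\Gamma)-k_0+1$.
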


\begin{remark}
It is known that 
\begin{eqnarray}
 {\rm Area}\left(\mathbb{H}/\Gamma(X, u) \right) =2\pi\left(2p-2+\sum_{i=1}^k(1-\frac{1}{\nu_i})\right) \nonumber
\end{eqnarray}
for a Fuchsian group $\Gamma$ of type $(p, k: \nu_1, \cdots, \nu_k)$.
See \cite{FarKra92}.
The Shimizu lemma also means that  the horodisks centered at punctures of $\mathbb{H}/\Gamma$ whose areas are 1 do not intersect each other.
\end{remark}

\begin{proof}[Proof of Theorem \ref{fuchsian}]
Suppose that $|c| \geq {\rm Area}(\mathbb{H}/\Gamma)-k_0+1$ for all  $ \tiny \left[\left(
  \begin{array}{cccc}
    a & b \\
    c & d
  \end{array}
  \right)
  \right] \in \Gamma$ with $c \not =0$. 
  Let $p_1, \cdots, p_{k_0}$ be punctures of $\mathbb{H}/\Gamma$.
We assume that $p_1$ corresponds to $ \tiny \left[\left(
  \begin{array}{cccc}
    1 & 1 \\
    0 & 1
  \end{array}
  \right)
  \right] $.
Let $U_i$ be the horodisks centered at $p_i$ ($i \in \{2, \cdots, k_0 \}$) whose areas are $1$.
By the Shimizu lemma,  $U_i \cap U_j = \phi$ $(i \not = j)$.
By considering the Ford region of $\Gamma$, it is easily proved that $p_1$ has the horodisk $U_1$ whose area is ${\rm Area}(\mathbb{H}/\Gamma)-k_0+1$ and
$U_1 \cap U_i= \phi$ for all $i \in \{2, \cdots, k_0 \}$. 
Moreover, $\mathbb{H}/\Gamma - \bigcup_{i=1}^{k_0} U_i$ has a positive area.
Therefore, 
\begin{eqnarray}
\displaystyle {\rm Area}(\mathbb{H}/\Gamma) > \sum_{i=1}^{k_0} {\rm Area}(U_i) ={\rm Area}(\mathbb{H}/\Gamma). \nonumber
\end{eqnarray}
This is a contradiction.
\end{proof}
Finally, we see another property of Veech groups. 
\begin{theorem}[The Veech dichotomy theorem \cite{Veech89}]\label{dichotomy}
Let $(X, u)$ be a flat surface.
Suppose that the Veech group  $\Gamma(X,u)$ of $(X, u)$ is of finite type.
Then every direction $\theta \in [0, \pi)$ satisfies one of the following properties:
\begin{itemize}
\item The direction $\theta$ is a Jenkins-Strebel direction.
Let $\{R_i \}_{i=1}^m$ be the cylinder decomposition of $(X, u)$ by the direction $\theta$. 
Then the ratio ${\rm mod}(R_i)/{\rm mod}(R_j)$ is a rational number for all $i,j \in \{1, \cdots, k\}$. 
\item Every $\theta$-direction geodesic is dense in $X$ and uniquely ergodic. 
That is, the $\theta$-direction geodesic flow has only one transverse measure $\mu$ up to scalar multiples such that the flow is ergodic with respect to $\mu$.
\end{itemize}
Here, the modulus ${\rm mod}(R_i)$ of the cylinder $R_i$ is the ratio of the circumference to the height.
\end{theorem}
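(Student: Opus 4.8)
The plan is to fix a direction $\theta$, apply an element of ${\rm SO}(2)$ to $(X, u)$ so that $\theta$ becomes the vertical direction --- this does not move the point of $(X, u)$ in $T(X)$ and merely conjugates $\Gamma(X, u)$, so the Veech group stays a Fuchsian group of finite type --- and then study the forward Teichm\"uller geodesic ray $\{g_t \cdot (X, u) : t \geq 0\}$, where $g_t = {\rm diag}(e^t, e^{-t})$ contracts the vertical direction. Under $\iota$ this ray is the image of a hyperbolic geodesic ray in $\mathbb{H}$ whose endpoint $\xi \in \partial \mathbb{H}$ is the boundary point determined by $\theta$, and its image in $M(g, n)$ lies in the Teichm\"uller curve $\mathbb{H}/\overline{\Gamma}(X, u)$, a hyperbolic orbifold of finite area. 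The first step is to use the hyperbolic geometry of lattices: since $\overline{\Gamma}(X, u)$ has finite covolume it is geometrically finite, so $\partial \mathbb{H}$ is the disjoint union of its conical limit points and its parabolic fixed points, and the ray above is accordingly either recurrent to a fixed compact subset of $\mathbb{H}/\overline{\Gamma}(X, u)$ or it escapes into a cusp. These two cases give the two alternatives.

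In the recurrent case I would first note that $(X, u)$ has no $\theta$-saddle connection: such a segment has purely vertical holonomy, so its length is multiplied by $e^{-t}$ along the ray and the systole of $g_t \cdot (X, u)$ tends to $0$, contradicting recurrence (the geodesic stays in the Teichm\"uller curve, whose compact subsets have compact image in moduli space). By the classical structure theorem for directional flows on flat surfaces, the absence of $\theta$-saddle connections forces the $\theta$-direction to be minimal, so every $\theta$-geodesic is dense in $X$. Recurrence of the Teichm\"uller geodesic together with Masur's criterion then shows that the vertical (equivalently, the $\theta$-direction) measured foliation is uniquely ergodic; that is, the $\theta$-geodesic flow has a unique transverse measure up to scalar multiples, and it is ergodic with respect to it. This is the second alternative.

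In the escaping case the endpoint $\xi$ is a parabolic fixed point of $\overline{\Gamma}(X, u)$, so there is a parabolic $P \in \Gamma(X, u)$ fixing the vertical direction; after the rotation, $P$ has a representative $\pm\left(\begin{smallmatrix} 1 & 0 \\ s & 1 \end{smallmatrix}\right)$ with $s \neq 0$. Let $h \in {\rm Aff}^+(X, u)$ with $D(h) = [P]$. Since the derivative of $h$ fixes the vertical direction, $h$ carries vertical closed geodesics to vertical closed geodesics and hence permutes the maximal vertical cylinders of $(X, u)$ (a short computation shows it preserves their circumferences and heights), and likewise it permutes the components of the decomposition of $X$ minus the $\theta$-saddle connections into cylinders and minimal components. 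The heart of the argument is to rule out minimal components, so that $\theta$ is a Jenkins--Strebel direction with cylinder decomposition $\{R_i\}_{i=1}^m$. Granting this, a suitable power $h^N$ fixes every $R_i$, and on $R_i$ it is, up to a rotation along the core, the $n_i$-th power of the Dehn twist about the core of $R_i$; global well-definedness of $h^N$ forces the products $n_i \, {\rm mod}(R_i)$ to be independent of $i$, so ${\rm mod}(R_i)/{\rm mod}(R_j) \in \mathbb{Q}$ for all $i, j$. This is the first alternative.

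I expect the main obstacle to be the last step of the escaping case: proving that a direction fixed by a parabolic element of the Veech group has no minimal component, i.e.\ that a parabolic affine automorphism is a multitwist. One approach is to show that on a putative minimal component a suitable power of $h$ would be an affine automorphism with unipotent derivative of infinite order preserving a minimal measured foliation and its transverse measure, and to derive a contradiction from the incompatibility of such an automorphism with the bounded flat geometry of the component; an alternative is to analyse the cusp into which the Teichm\"uller ray escapes and to show that all surfaces near it, hence $(X, u)$ itself, carry a fixed $\theta$-cylinder decomposition. A secondary ingredient, if one wants a self-contained treatment, is Masur's criterion, which is used here as a black box. All of this is phrased for translation surfaces; the half-translation case allowed by the definition of flat structure is handled verbatim, working with quadratic differentials and their horizontal and vertical foliations.
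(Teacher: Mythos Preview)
The paper does not give a proof of this statement: Theorem~\ref{dichotomy} is quoted from Veech's 1989 paper \cite{Veech89} as a background result, with no argument supplied beyond the reference. So there is no in-paper proof to compare your proposal against.

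That said, your sketch follows the standard route to the Veech dichotomy and is sound in outline. The dichotomy ``conical limit point versus parabolic fixed point'' for a lattice in ${\rm PSL}(2,\mathbb{R})$ is exactly the right mechanism, and your handling of the recurrent case (no vertical saddle connection $\Rightarrow$ minimality $\Rightarrow$ unique ergodicity via Masur's criterion) is the usual argument. In the parabolic case your identification of the crux---ruling out minimal components for a direction fixed by a parabolic affine map---is accurate; one clean way to close this gap is to note that $h$ preserves vertical lengths, hence permutes the finitely many vertical saddle connections, and that on a putative minimal component a power of $h$ would be an infinite-order affine automorphism with unipotent derivative acting on a surface whose vertical flow is minimal, which forces the horizontal transverse measure to be $h$-invariant yet shifted by a nonzero amount along every vertical leaf, a contradiction. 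Alternatively one can argue, as you suggest, via the geometry of the cusp. Either way, once complete periodicity is established your computation of the moduli ratios from the multitwist structure of $h^N$ is correct. One small slip: with your normalisation (rotating $\theta$ to the \emph{vertical} and contracting vertical under $g_t$), the parabolic $P$ fixing the vertical direction should be represented by $\pm\left(\begin{smallmatrix}1 & s\\ 0 & 1\end{smallmatrix}\right)$ rather than $\pm\left(\begin{smallmatrix}1 & 0\\ s & 1\end{smallmatrix}\right)$; the lower-triangular form fixes the horizontal. This does not affect the logic of the argument.
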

For details of ergodicity,
 see \cite{KatHas96}, \cite{Nikolaev01}.

\section{Main Theorems}\label{main}
In this section, we prove the following two theorems.
First theorem gives upper bounds of numbers of holomorphic sections of all Veech holomorphic families of Riemann surfaces. 
The second theorem gives a relation between types of Veech groups of flat surfaces and moduli of cylinders of cylinder  decompositions of the flat surfaces.
\begin{theorem}\label{mainthm1}
Let $(M, \pi, B)$ be a Veech holomorphic family of Riemann surfaces of type $(g, n)$ over $B$.
Suppose that the base space $B$ is a Riemann surface of type $(p, k)$. 
Then the number of holomorphic sections of $(M, \pi, B)$ is at most
\begin{eqnarray}
32 \pi (2p-2+k) (3 g-3+n)^2  \left\{ 2(3 g-3+n) + 3 \exp \left(\frac{5}{e}(3 g-3+n)\right) \right\}.\nonumber
\end{eqnarray}
\end{theorem}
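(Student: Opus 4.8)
The plan is to reduce the counting of holomorphic sections to counting points $a \in X$ satisfying $\mathrm{Aff}^+(X,u)\{a\} = \mathrm{Ker}(D)\{a\}$, which is legitimate by Theorem \ref{characterization}, and then to bound the number of such points by the strategy already used to prove Theorem \ref{upperbound1}, removing the hypothesis that $(X,u)$ has a \emph{simple} Jenkins–Strebel direction. First I would choose a Jenkins–Strebel direction $\theta$; such a direction exists because the Veech group is of finite type, and by the Veech dichotomy (Theorem \ref{dichotomy}) any parabolic fixed direction works. After normalizing $\theta$ to be horizontal, the cylinder decomposition $\{R_i\}_{i=1}^m$ with $m \le 3g-3+n$ gives a parabolic element of $\Gamma(X,u)$ whose derivative is conjugate to $\left[\left(\begin{smallmatrix}1&1\\0&1\end{smallmatrix}\right)\right]$ after rescaling the common horizontal period; this is the element I feed into Theorem \ref{fuchsian}.

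The core of the argument is: any candidate point $a$ must be a fixed point, inside some cylinder $R_i$, of an affine map whose derivative is a non-trivial hyperbolic or parabolic element of $\Gamma(X,u)$ with small lower-left matrix entry $|c|$. Theorem \ref{fuchsian} bounds $|c|$ by $\mathrm{Area}(\mathbb{H}/\Gamma(X,u)) - k_0 + 1$, and the Gauss–Bonnet formula for orbifolds bounds this area in terms of $(p,k)$, up to the contribution of the cone angles $\nu_i$. In the simple-Jenkins–Strebel case of Theorem \ref{upperbound1} one controls $\mathrm{Area}(\mathbb{H}/\Gamma(X,u))$ directly by $2\pi(2p-2+k)$; in general I expect the bound to acquire the extra factor $\{2(3g-3+n) + 3\exp(\tfrac{5}{e}(3g-3+n))\}$ precisely because the cone points cannot be ignored. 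The mechanism: in a non-simple cylinder decomposition the ratios $\mathrm{mod}(R_i)/\mathrm{mod}(R_j)$ are rational (again Theorem \ref{dichotomy}), and writing them over a common denominator produces an affine multi-twist whose derivative is a concrete parabolic; the least common multiple of these denominators is what controls the index of the subgroup generated by that parabolic, and elementary estimates on how large such an l.c.m. can be — given that there are at most $3g-3+n$ cylinders — yield an exponential-in-$(3g-3+n)$ bound, which is the source of the $\exp(\tfrac{5}{e}(3g-3+n))$ term. Once $|c|$ is bounded, each admissible derivative matrix has at most two fixed points on $X$ (the affine map being a Teichmüller-type deformation restricted to a cylinder), and summing $2|c|$-type counts over the cylinders and over the finitely many relevant matrices gives a bound of the stated shape; the $-2g+2$ Euler-characteristic correction present in Theorem \ref{upperbound1} is absorbed into the looser estimate here.

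The main obstacle is the second step: controlling $\mathrm{Area}(\mathbb{H}/\Gamma(X,u))$, equivalently the cone orders $\nu_i$ and the number $k$ of cusps of the Veech group, purely in terms of the topological type $(p,k)$ of the \emph{base} $B$ together with $(g,n)$. When there is only a simple Jenkins–Strebel direction the relationship between the parabolic generated by the single cylinder twist and a chosen parabolic generator of $\Gamma(X,u)$ is transparent; in the general case one must argue that, even though a single cylinder twist need not generate a maximal parabolic subgroup, the obstruction is bounded by a quantity depending only on $m \le 3g-3+n$ (via the l.c.m. of at most $3g-3+n$ moduli-ratio denominators, each of which is itself constrained). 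I would isolate this as a lemma — bounding the index $[\,\Gamma(X,u) : \langle \text{a primitive parabolic dividing the cylinder twist}\rangle\,]$ and hence $\mathrm{Area}(\mathbb{H}/\Gamma(X,u))$ — and expect the quantitative heart of the paper to be the exponential estimate $\prod$(denominators)$\le \exp(\tfrac{5}{e}(3g-3+n))$, proved by optimizing the product of the denominators subject to a linear constraint on their number. Everything downstream (applying Theorem \ref{fuchsian}, counting fixed points cylinder-by-cylinder, and assembling the final expression) is then a routine but lengthy computation following the template of \cite{Shinomiya12vhf}.
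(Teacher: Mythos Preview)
Your overall framework is right---reduce to counting $S(X,u)=\{a:\mathrm{Aff}^+(X,u)\{a\}=\mathrm{Ker}(D)\{a\}\}$ via Theorem~\ref{characterization}, pick a horizontal Jenkins--Strebel direction, and exploit a primitive parabolic $B=\left[\left(\begin{smallmatrix}1&b_0\\0&1\end{smallmatrix}\right)\right]$ together with an element $A$ of minimal $|c_1|\neq 0$ furnished by Theorem~\ref{fuchsian}. But you have misidentified both the quantity that carries the exponential factor and the mechanism by which fixed points are counted.

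First, the area $\mathrm{Area}(\mathbb{H}/\Gamma(X,u))=2\pi\bigl(2p-2+\sum_{i=1}^k(1-1/\nu_i)\bigr)$ is \emph{trivially} bounded by $2\pi(2p-2+k)$, since each term $1-1/\nu_i\le 1$; there is nothing to do with cone points here, and this is not where the exponential enters. What actually requires the exponential bound is the ratio $\mathrm{mod}(R_i)/b_0$: the primitive parabolic translation length $b_0$ need not equal any single $\mathrm{mod}(R_i)$, and the paper shows (Lemma~\ref{ratio}) that $\mathrm{mod}(R_i)/b_0<2\exp\bigl(\tfrac{5}{e}(3g-3+n)\bigr)$ by finding a power $h_B^\alpha$ that fixes every cylinder boundary pointwise. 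The integer $\alpha$ is controlled by the Landau function $G(m)$ (largest element order in $S_m$, coming from the permutation of the $m$ cylinders) times a product of saddle-connection counts on the boundaries, and the estimate $G(m)\le\exp(m/e)$ together with AM--GM on $\sum(s_i^1+s_i^2)\le 4(3g-3+n)$ gives the stated exponential. Your ``l.c.m.\ of moduli-ratio denominators'' intuition is pointed at the right phenomenon but attached to the wrong output.

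Second, the claim that ``each admissible derivative matrix has at most two fixed points on $X$'' is false: an affine self-map of a flat surface is not a M\"obius transformation and can have many fixed points. The paper's count works differently. Passing to $Y=X/\mathrm{Ker}(D)$, one observes that $\varphi(S(X,u))$ is fixed by both $h_A'$ and $h_B'$; hence it lies in the finite set $\mathrm{Cross}(A)$ of intersections $l_w'\cap h_A'(l_w')$, where $l_w'$ ranges over the (finitely many) horizontal closed geodesics carrying a fixed point of $h_B'$, together with boundary intersections. The number of such geodesics in $R_j'$ is at most $\lceil b_0\,\sharp\mathrm{Ker}(D)/\mathrm{mod}(R_i)\rceil$, each contributes at most $2\,\mathrm{mod}(R_i)c_1$ intersection points, and $\sharp\mathrm{Ker}(D)\le 4(3g-3+n)$. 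Assembling these gives
\[
\sharp S(X,u)\le 2b_0c_1\,\sharp\mathrm{Ker}(D)\sum_{i=1}^m\Bigl(\sharp\mathrm{Ker}(D)+\tfrac{3\,\mathrm{mod}(R_i)}{b_0}\Bigr),
\]
and now $b_0c_1\le\mathrm{Area}(\mathbb{H}/\Gamma(X,u))\le 2\pi(2p-2+k)$ contributes the linear factor while $\mathrm{mod}(R_i)/b_0$ contributes the exponential one.
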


\begin{theorem}\label{mainthm2}
Let $(X, u)$ be a flat surface of type $(g, n)$.
Suppose that the Veech group $\Gamma(X, u)$ is of type $(p, k : \nu_1, \cdots, \nu_k )$ $(\nu_i \in \{2, 3, \cdots, \infty\})$.
Let $\{ R_i \}$ be the cylinder decomposition of $(X, u)$  by a Jenkins-Strebel direction.
Then, 
\begin{eqnarray}
 \left( \frac{{\rm mod}(R_i)}{{\rm mod}(R_j)} \right)^{\frac{1}{2}}< 4\pi\exp \left(\frac{5}{e}(3g-3+n)\right)\left(2p-2+\sum_{r=1}^k\left(1- \frac{1}{\nu_r}\right)\right) \nonumber
\end{eqnarray}
for all $i, j \in \{1, \cdots, m \}$.
\end{theorem}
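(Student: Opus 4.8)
The plan is to rephrase the claimed inequality as a bound on the twist numbers of the affine multi-twist in the given direction and then to control those numbers by combining Theorem \ref{fuchsian} with the combinatorics of the cylinder decomposition. First I would normalize: the ratios $\mathrm{mod}(R_i)/\mathrm{mod}(R_j)$, the type $(g,n)$ of $X$, and the type $(p,k:\nu_1,\dots,\nu_k)$ of $\Gamma(X,u)$ are all unchanged if $(X,u)$ is replaced by $A\cdot(X,u)$ for a diagonal $A\in\mathrm{SL}(2,\mathbb R)$, and such an $A$ fixes the given Jenkins--Strebel direction and conjugates $\Gamma(X,u)$ by $A$. So I may assume that direction is horizontal and that the primitive parabolic element of $\Gamma(X,u)$ fixing the horizontal direction is $\bigl[\bigl(\begin{smallmatrix}1&1\\0&1\end{smallmatrix}\bigr)\bigr]$. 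By the Veech dichotomy (Theorem \ref{dichotomy}) the ratios $\mathrm{mod}(R_i)/\mathrm{mod}(R_j)$ are rational, so there are smallest positive integers $n_1,\dots,n_m$ with $N:=n_i\,\mathrm{mod}(R_i)$ independent of $i$. The multi-twist $\tau:=\prod_i T_{R_i}^{n_i}$ about the core curves lies in $\mathrm{Aff}^+(X,u)$ (a multi-twist $\prod_i T_{R_i}^{k_i}$ is affine exactly when $k_i\,\mathrm{mod}(R_i)$ is independent of $i$), so $D(\tau)=\bigl[\bigl(\begin{smallmatrix}1&N\\0&1\end{smallmatrix}\bigr)\bigr]\in\Gamma(X,u)$, and primitivity of $\bigl[\bigl(\begin{smallmatrix}1&1\\0&1\end{smallmatrix}\bigr)\bigr]$ forces $N\in\mathbb Z_{>0}$; moreover $\mathrm{mod}(R_i)/\mathrm{mod}(R_j)=n_j/n_i$. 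Relabelling so that $n_1\le\dots\le n_m$, it suffices to prove $n_m/n_1<\bigl(2\,\mathrm{Area}(\mathbb H/\Gamma(X,u))\,e^{\frac5e(3g-3+n)}\bigr)^{2}$, which is the square of the asserted estimate.

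Next I would bring in the geometry of $\mathbb H/\Gamma(X,u)$ via Theorem \ref{fuchsian}: since $\Gamma(X,u)$ is Fuchsian of type $(p,k:\nu_1,\dots,\nu_k)$ and contains the primitive $\bigl[\bigl(\begin{smallmatrix}1&1\\0&1\end{smallmatrix}\bigr)\bigr]$, there is $\gamma=\bigl[\bigl(\begin{smallmatrix}a&b\\c&d\end{smallmatrix}\bigr)\bigr]\in\Gamma(X,u)$ with $1\le|c|<\mathrm{Area}(\mathbb H/\Gamma(X,u))=2\pi\bigl(2p-2+\sum_{r=1}^k(1-1/\nu_r)\bigr)$. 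Fix $h\in\mathrm{Aff}^+(X,u)$ with $D(h)=\gamma$, and let $\theta'$ be the direction to which $h$ maps the horizontal direction; since $c\ne0$ we have $\theta'\ne0$, so $h$ carries $\{R_i\}$ to the cylinder decomposition of $(X,u)$ in direction $\theta'$, genuinely mixing the horizontal cylinders. On its own $h$ only rescales every horizontal modulus by the common factor $a^2+c^2$, so it must be combined with the permutation of cylinders coming from the primitive parabolic in order to separate the individual $n_i$.

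The heart of the argument — and where I expect the difficulty, and the constant $5/e$, to lie — is the resulting combinatorial estimate. The primitive parabolic is realised by an affine $h_0$ with $D(h_0)=\bigl[\bigl(\begin{smallmatrix}1&1\\0&1\end{smallmatrix}\bigr)\bigr]$ that permutes $R_1,\dots,R_m$ by a permutation $\sigma$; cylinders in one $\sigma$-orbit are isometric, hence have equal moduli and equal twist numbers, so only comparisons between distinct orbits remain. For those one estimates the geometric intersection numbers of the horizontal core curves $\alpha_i$ with the curves $h(\alpha_j)$ in direction $\theta'$ in terms of $|c|$ and the circumferences and heights of the cylinders, and chains these estimates over the at most $m<3g-3+n$ cylinders; a combinatorial factor attached to the decomposition — plausibly a bounded power of the product of the $\sigma$-orbit sizes, which is at most $e^{(3g-3+n)/e}$ since positive integers summing to $m$ have product at most $e^{m/e}$ — yields $\exp(\tfrac5e(3g-3+n))$, while $|c|<\mathrm{Area}(\mathbb H/\Gamma(X,u))$ supplies the area factor. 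The main obstacle is exactly carrying out this last step: passing from the single controlled element $\gamma$ to a simultaneous comparison of the moduli of all $m$ cylinders, uniformly in the gluing combinatorics of the decomposition, with the intersection-number and orbit-size bookkeeping sharp enough to land on the stated exponent $\tfrac5e(3g-3+n)$; everything preceding it is routine.
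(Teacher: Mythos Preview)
Your overall strategy matches the paper's: normalize the primitive horizontal parabolic $B=\bigl[\bigl(\begin{smallmatrix}1&b_0\\0&1\end{smallmatrix}\bigr)\bigr]$, invoke Theorem~\ref{fuchsian} to obtain $A=\bigl[\bigl(\begin{smallmatrix}a_1&b_1\\c_1&d_1\end{smallmatrix}\bigr)\bigr]\in\Gamma(X,u)$ with $b_0c_1\le\mathrm{Area}(\mathbb H/\Gamma(X,u))$, and then combine an intersection-number argument with the combinatorics of the cylinder decomposition. The gap lies exactly where you locate it, and your guess for the combinatorial factor is incomplete: the product of $\sigma$-orbit sizes (equivalently, the Landau bound $G(m)\le e^{m/e}$) accounts for only $e^{(3g-3+n)/e}$ of the required $e^{5(3g-3+n)/e}$, and no amount of ``chaining over cylinders'' explains the remaining factor.

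The paper does not chain estimates across cylinders. It proves a two-sided bound on $b_0/\mathrm{mod}(R_i)$ (Lemma~\ref{ratio}), from which Theorem~\ref{mainthm2} follows immediately by dividing. For the bound $\mathrm{mod}(R_i)/b_0<2e^{5(3g-3+n)/e}$ one finds a power $\alpha$ such that $h_B^\alpha$ fixes every boundary component of every $R_i$ \emph{pointwise}; then $h_B^\alpha$ is a product of positive Dehn twists, so $\alpha b_0=n_i\,\mathrm{mod}(R_i)$ with $n_i\ge1$, giving $\mathrm{mod}(R_i)/b_0\le\alpha$. One has $\alpha\le 2\,G(m)\prod_{i=1}^m s_i^1 s_i^2$, where $s_i^r$ is the number of saddle connections on the $r$-th boundary component of $R_i$: the Landau factor $G(m)$ makes $h_B^{m_0}$ preserve each cylinder (your orbit factor), the $2$ fixes each boundary component setwise, and the factors $s_i^r$ are what is needed to fix them pointwise. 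This saddle-connection product is what your outline misses; since $\sum_{i,r}s_i^r\le4(3g-3+n)$ by an Euler-characteristic count (Lemma~\ref{segments}), AM--GM gives $\prod_i s_i^1 s_i^2\le\bigl(2(3g-3+n)/m\bigr)^{2m}\le e^{4(3g-3+n)/e}$, and combined with $G(m)\le e^{(3g-3+n)/e}$ this yields $\alpha<2e^{5(3g-3+n)/e}$. For the reverse bound one conjugates: $h=h_A^{-1}h_B^\alpha h_A$ has lower-left derivative entry $-\alpha b_0 c_1^2$, and since $h_A(C_i)$ meets some core curve $C_j$ while $h_B^\alpha$ twists nontrivially about every $C_j$, a single intersection gives $\mathrm{mod}(R_i)\,\alpha b_0 c_1^2\ge1$, hence $b_0/\mathrm{mod}(R_i)\le\alpha(b_0c_1)^2<2e^{5(3g-3+n)/e}\,\mathrm{Area}(\mathbb H/\Gamma(X,u))^2$.
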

By Theorem \ref{characterization}, a proof of Theorem \ref{mainthm1} is given by estimating the cardinality of the set 
\begin{eqnarray}
S(X, u)=\left\{z \in X : {\rm Aff}^+(X, u)\{z \}= {\rm Ker}(D)\{ z\} \right\}. \nonumber
\end{eqnarray}
Let $\varphi : X \rightarrow Y=X/{\rm Ker}(D)$ be the quotient map.
Then $Y$ has a flat structure $u^\prime$ induced by the flat structure $u$ on $X$.
Assume that $Y$ is of type $(g^\prime, n^\prime)$.
Since $\Gamma(X, u)={\rm Aff}^+(X, u)/{\rm Ker}(D)$, we may consider $\Gamma(X, u)$ as a subgroup of the affine group ${\rm Aff}^+(Y, u^\prime)$.
Then, $\varphi(S(X, u))=\{w \in Y: \Gamma(X, u)\{w\}=\{w\}\}$.
Theorem \ref{dichotomy} and  the assumption that $\Gamma(X, u)$ is of finite type imply that the set of all Jenkins-Strebel directions of $(X, u)$ is dense in $[0, \pi)$.
We may assume that the direction $\theta=0$ is a $m$-Jenkins-Strebel direction of $(X, u)$ for some $1\leq m\leq 3g-3+n$.
Then $\theta=0$ is also a $m^\prime$-Jenkins-Strebel direction of $(Y, u^\prime)$ for some $m^\prime \leq m$.
Let $\{R_i\}_{i=1}^m$ and $\{R_j^\prime\}_{j=1}^{m^\prime}$ be the cylinder decompositions of $(X, u)$ and $(Y, u^\prime)$ by the direction $\theta=0$, respectively.
We define a map $\sigma: \{1, \cdots, m \} \rightarrow \{1, \cdots, m^\prime\}$ by $\sigma(i)=j$ if $\varphi(R_i)=R_j^\prime$.
By Theorem \ref{dichotomy}, the Veech group $\Gamma(X, u)$ contains elements of the form 
$\tiny \left[\left(
  \begin{array}{cccc}
    1 & b \\
    0 & 1
  \end{array}
  \right)
  \right]$ $(b \not =0)$.
As $\Gamma(X, u)$ is of finite type, $\Gamma(X, u)$ also contains elements of the form 
$\tiny \left[\left(
  \begin{array}{cccc}
    * & * \\
    c & *
  \end{array}
  \right)
  \right]$ $(c \not =0)$.
We set
\begin{eqnarray}
b_0=\inf \left\{ |b| : \tiny \left[\left(
  \begin{array}{cccc}
    1 & b \\
    0 & 1
  \end{array}
  \right)
  \right] \in \Gamma(X, u), b \not =0 \right\} \nonumber 
\end{eqnarray}
and
\begin{eqnarray}
c_1=\inf \left\{|c| : \tiny \left[\left(
  \begin{array}{cccc}
    a & b \\
    c & d
  \end{array}
  \right)
  \right] \in \Gamma(X, u), c \not =0 \right\}. \nonumber
\end{eqnarray}
Let $A=\tiny \left[\left(
  \begin{array}{cccc}
    a_1 & b_1 \\
    c_1 & d_1
  \end{array}
  \right)
  \right] $ and $B=\tiny \left[\left(
  \begin{array}{cccc}
    1 & b_0 \\
    0 & 1
  \end{array}
  \right)
  \right]$  be elements of $\Gamma(X, u)$ which attain the numbers $c_1$ and $b_0$, respectively. 
Denote by $h_A^\prime$ and $h_B^\prime$ the elements of ${\rm Aff}^+(Y, u^\prime)$ corresponding to $A$ and $B $, respectively.
Let $I_j^\prime$ be a vertical open interval connecting two boundary components $L_{j,1}^\prime$ and $L_{j,2}^\prime$ of $R_j^\prime$ for each $j \in \{1, \cdots, m^\prime \}$. 
Let $l_w^\prime$ be the horizontal closed geodesic containing $w \in I_j^\prime$.
We set 
\begin{center}
$I_{j, 0}^\prime =\{ w \in I_j^\prime : l_w^\prime$ contains a fixed point of $h_B^\prime \}$.
\end{center}
Then we consider the set
\begin{eqnarray}
 {\rm Cross}(A)=\left(\bigcup_{j=1}^{m^\prime}\bigcup_{w \in I_{j, 0}^\prime}\left( l_w^\prime \cap h_A^\prime\left(l_w^\prime \right)\right)\right)
 \bigcup 
\left(\bigcup_{j=1}^{m^\prime}\bigcup_{r=1,2} \left(   L_{j,r}^\prime \cap h_A^\prime(L_{j,r}^\prime)\right)\right). \nonumber
\end{eqnarray}
\begin{lemma}\label{lemma1}
The set ${\rm Cross}(A)$ contains $\varphi(S(X,u))$.
\end{lemma}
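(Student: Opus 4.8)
The plan is to show that every point $w \in \varphi(S(X,u))$ must lie on one of the two geometric "crossing" loci that make up ${\rm Cross}(A)$. Recall $\varphi(S(X,u)) = \{ w \in Y : \Gamma(X,u)\{w\} = \{w\}\}$, so such a $w$ is fixed by every element of $\Gamma(X,u)$ acting through ${\rm Aff}^+(Y, u')$; in particular it is fixed by $h_B'$ and by $h_A'$. First I would analyze the action of $h_B'$, whose derivative is the parabolic $\bigl(\begin{smallmatrix}1 & b_0\\ 0 & 1\end{smallmatrix}\bigr)$: since $\theta = 0$ is a Jenkins–Strebel direction and $h_B'$ is a horizontal shear, $h_B'$ preserves each horizontal cylinder $R_j'$ and acts on it as a (partial) Dehn twist along the horizontal core curves. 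Consequently a point $w$ fixed by $h_B'$ must lie on a horizontal closed geodesic $\ell$ that is itself invariant under $h_B'$ — i.e. a geodesic through a fixed point of $h_B'$ — or else on the horizontal boundary of a cylinder (a union of saddle connections), which $h_B'$ permutes. This is exactly the content of the definition of $I_{j,0}'$ and of the boundary components $L_{j,r}'$: after intersecting the fixed vertical transversal $I_j'$ with the $h_B'$-invariant horizontal leaves, the horizontal leaf $\ell_w'$ through $w$ is $\ell_w'$ for some $w \in I_{j,0}'$, unless $w$ lies on some $L_{j,r}'$.

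Next I would bring in $h_A'$. We already know $w$ is a fixed point of $h_A'$. If $w$ lies on a horizontal boundary component $L_{j,r}'$, then $w \in L_{j,r}' \cap h_A'(L_{j,r}')$, which is the second bracketed union in ${\rm Cross}(A)$ (one should check $h_A'$ maps $L_{j,r}'$ to some boundary component and, since $w$ is fixed, to $L_{j,r}'$ itself, or else simply take the union over the image component — either way $w$ lies in the displayed set). If instead $w$ lies on an interior horizontal leaf $\ell_w'$ with $w \in I_{j,0}'$, then because $h_A'$ fixes $w$, the point $w$ belongs to $\ell_w' \cap h_A'(\ell_w')$; but the derivative of $h_A'$ is $\bigl(\begin{smallmatrix}a_1 & b_1\\ c_1 & d_1\end{smallmatrix}\bigr)$ with $c_1 \ne 0$, so $h_A'$ is not horizontal-direction-preserving and $h_A'(\ell_w')$ is a geodesic of a different direction; hence $w$ lies in the first bracketed union. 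In both cases $w \in {\rm Cross}(A)$, which is what we want.

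The step I expect to be the main obstacle is the careful bookkeeping in the first paragraph: precisely characterizing the fixed point set of the parabolic affine map $h_B'$ on each cylinder and checking that a $\Gamma(X,u)$-fixed point must sit on an $h_B'$-invariant horizontal leaf meeting the chosen transversal $I_j'$. One must handle the possibility that $w$ is a critical point or lies on a saddle connection that is not horizontal (ruling this out, or absorbing it into the $L_{j,r}'$ terms), and one must confirm that an $h_B'$-invariant horizontal closed geodesic is exactly one passing through a fixed point of $h_B'$ — equivalently, that $h_B'$ acts on the circle of horizontal leaves of $R_j'$ with the fixed leaves being precisely those through fixed points. Once that local picture is nailed down, the intersection with $h_A'$ is immediate from $w$ being a common fixed point, and the inclusion $\varphi(S(X,u)) \subset {\rm Cross}(A)$ follows.
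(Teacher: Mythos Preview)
Your approach is correct and is exactly the paper's: a point $w\in\varphi(S(X,u))$ is fixed by every element of $\Gamma(X,u)$ acting on $(Y,u')$, in particular by $h_A'$ and $h_B'$, and the case split ``$w$ lies on some $L_{j,r}'$'' versus ``$w$ lies on some interior leaf $l_{w_0}'$'' immediately gives $w\in{\rm Cross}(A)$. The paper's own proof is literally one sentence to this effect.

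One remark: your detailed analysis of the dynamics of $h_B'$ is unnecessary and contains a small inaccuracy. You assert that $h_B'$ preserves each cylinder $R_j'$, but in fact $h_B'$ may permute cylinders (the paper explicitly allows for this in the proof of the next lemma). Fortunately none of that is needed here: the definition of $I_{j,0}'$ only requires that $l_{w_0}'$ contain \emph{some} fixed point of $h_B'$, and $w$ itself is such a fixed point; so once $w$ lies on $l_{w_0}'$ you are done, with no need to discuss invariant leaves, partial Dehn twists, or how $h_B'$ acts on the circle of horizontal leaves. Likewise, critical points of $(Y,u')$ lying in $Y$ automatically sit on boundary components $L_{j,r}'$, so no separate treatment is required.
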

\begin{proof}
Since $\varphi \left(S(X, u)\right)$ is the set of all fixed points of $\Gamma(X, u)$ on $(Y, u^\prime)$, the set $\varphi \left(S(X, u)\right)$ is contained in ${\rm Cross}(A)$.
\end{proof}
Let $H_i, H_j^\prime$ be heights of the cylinders $R_i, R_j^\prime$ and   $W_i, W_j^\prime$ circumferences of the cylinders $R_i, R_j^\prime$, respectively.
Recall that the moduli of $R_i$ and $R_j^\prime$ are defined by ${\rm mod}(R_i)=W_i/H_i$ and ${\rm mod}(R_j^\prime)=W_j^\prime/H_j^\prime$.
Since 
\begin{eqnarray}
W_i/\sharp {\rm Ker}(D)  \leq &W_{\sigma(i)}^\prime &\leq W_i \nonumber
\end{eqnarray}
and
\begin{eqnarray}
H_i/2 \leq & H_{\sigma(i)}^\prime &\leq H_i,  \nonumber
\end{eqnarray}
we have
\begin{eqnarray}
{\rm mod}(R_i)/\sharp {\rm Ker}(D) \leq  {\rm mod}(R_{\sigma(i)}^\prime) \leq 2{\rm mod}(R_i). \nonumber
\end{eqnarray}
\begin{lemma}\label{lemma2}
If $j=\sigma(i)$, then 
\begin{eqnarray}
\sharp I_{j,0}^\prime \leq \left\lceil \frac{b_0 \sharp {\rm Ker}(D)}{{\rm mod}(R_i)}\right\rceil. \nonumber
\end{eqnarray}
Here, $\left\lceil x \right\rceil$ is the smallest integer which is greater than or equal to $x$.
\end{lemma}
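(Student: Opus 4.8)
The plan is to estimate $\sharp I_{j,0}'$ by analysing how $h_B'$ acts on the single horizontal cylinder $R_j'=R_{\sigma(i)}'$ of $(Y,u')$, and then to convert a bound stated in terms of $\mathrm{mod}(R_j')$ into one in terms of $\mathrm{mod}(R_i)$ using the modulus comparison recorded just before the lemma.

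First I would set up coordinates. Developing $R_j'$ to the plane, write $R_j'=\{(x,y):0<y<H_j'\}/(x\sim x+W_j')$, so that the horizontal closed geodesics $l_w'$ are the circles $\{y=\text{const}\}$, the interval $I_j'$ meets each of them exactly once, and a point $w\in I_j'$ is recorded by its height $y\in(0,H_j')$. Since $D(h_B')=[B]$, the map $h_B'$ has linear part $\pm B$, and $\pm B$ fixes the horizontal direction; hence $h_B'$ carries horizontal geodesics to horizontal geodesics and therefore permutes the cylinders $R_1',\dots,R_{m'}'$. If $h_B'(R_j')\neq R_j'$, then for every $w\in I_j'$ the geodesic $l_w'$ and its image $h_B'(l_w')$ lie in disjoint cylinders, so $l_w'$ carries no fixed point of $h_B'$ and $I_{j,0}'=\emptyset$; so from now on assume $h_B'(R_j')=R_j'$.

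Next I would put $h_B'|_{R_j'}$ into normal form. In the coordinates above its linear part is $\epsilon B$ with $\epsilon\in\{1,-1\}$, and since it maps $R_j'$ onto itself it must be $(x,y)\mapsto(x+b_0y+c_0,\,y)$ when $\epsilon=1$, and $(x,y)\mapsto(-x-b_0y+c_0,\,H_j'-y)$ when $\epsilon=-1$, for some constant $c_0$. In the case $\epsilon=-1$ the leaf at height $y$ is sent to the leaf at height $H_j'-y$, and these coincide only for $y=H_j'/2$, so $\sharp I_{j,0}'\leq1$; since $b_0>0$ and $\mathrm{mod}(R_i)>0$ this is $\leq\lceil b_0\,\sharp\mathrm{Ker}(D)/\mathrm{mod}(R_i)\rceil$. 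In the case $\epsilon=1$, on the leaf at height $y$ the map is the rotation $x\mapsto x+b_0y+c_0$ of a circle of circumference $W_j'$, which is the identity when $b_0y+c_0\in W_j'\mathbb{Z}$ and is fixed-point free otherwise; hence the heights contributing to $I_{j,0}'$ form an arithmetic progression of common difference $W_j'/b_0$ inside the open interval $(0,H_j')$ of length $H_j'$, and such a progression has at most $\lceil H_j'/(W_j'/b_0)\rceil$ terms there, so
\[
\sharp I_{j,0}'\ \leq\ \left\lceil\frac{H_j'}{W_j'/b_0}\right\rceil\ =\ \left\lceil\frac{b_0}{\mathrm{mod}(R_j')}\right\rceil .
\]

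Finally I would combine this with the inequality $\mathrm{mod}(R_i)/\sharp\mathrm{Ker}(D)\leq\mathrm{mod}(R_{\sigma(i)}')$ established just above, which gives $b_0/\mathrm{mod}(R_j')=b_0/\mathrm{mod}(R_{\sigma(i)}')\leq b_0\,\sharp\mathrm{Ker}(D)/\mathrm{mod}(R_i)$; since $x\mapsto\lceil x\rceil$ is nondecreasing, $\sharp I_{j,0}'\leq\lceil b_0\,\sharp\mathrm{Ker}(D)/\mathrm{mod}(R_i)\rceil$, as claimed. The step I expect to be the main obstacle is the normal-form computation for $h_B'|_{R_j'}$: one must use the affine structure (and the fact, forced by finiteness of area, that the linear part lies in $\mathrm{SL}(2,\mathbb{R})$) to be sure that in the cylinder coordinates the linear part is exactly $\pm B$ with shear coefficient precisely $b_0$, so that the progression of fixed leaves really has step $W_j'/b_0$, and to absorb every remaining possibility into the $\epsilon=-1$ case. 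Once the normal form is in place, the counting and the final comparison are routine.
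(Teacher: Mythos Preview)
Your proof is correct and follows essentially the same approach as the paper: the paper also splits into the cases $h_B'(R_j')\neq R_j'$, $h_B'$ preserves $R_j'$ but swaps its boundary components, and $h_B'$ preserves each boundary component, writing in the last case $h_B'(u,v)=(u+b_0v+W_j'\xi,\,v)$ on $[0,W_j')\times(0,H_j')$ and counting fixed leaves to obtain $\sharp I_{j,0}'\leq\lceil b_0 H_j'/W_j'+1\rceil-1=\lceil b_0/\mathrm{mod}(R_j')\rceil$, then applying the modulus comparison exactly as you do. Your normal-form discussion and the identification of the $\epsilon=-1$ case with the boundary-swapping case make the same argument slightly more explicit, but there is no substantive difference.
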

\begin{proof}
Suppose that $j=\sigma(i)$. 
If $h_B^\prime$ does not fix $R_j^\prime$, then $\sharp I_{j, 0}^\prime=0$.
If $h_B^\prime$ fixes $R_j^\prime$ and permutes two boundary components of $R_j^\prime$, then $\sharp I_{j, 0} = 1$.
Otherwise, by identifying $R_j^\prime$ with $[0, W_j^\prime)\times (0, H_j^\prime)$, the affine map $h_B^\prime$ can be represented as
\begin{eqnarray}
h_B^\prime\left(
  \begin{array}{cccc}
    u \\
    v
  \end{array}
  \right)= \left(
  \begin{array}{cccc}
    1 & b_0 \\
    0 & 1
  \end{array}
  \right)
  \left(
  \begin{array}{cccc}
    u \\
    v
  \end{array}
  \right)
 + \left(
  \begin{array}{cccc}
    W_j^\prime \xi  \\
    0
  \end{array}
  \right) \nonumber
\end{eqnarray} 
for some $0 \leq \xi <1$.
Thus, 
$\sharp I_{j,0}^\prime  \leq \left\lceil \frac{b_0 H_j^\prime}{W_j^\prime}+1\right\rceil-1 \leq \left\lceil \frac{b_0 \sharp {\rm Ker}(D)}{{\rm mod}(R_i)}\right\rceil$.
\end{proof}
\begin{lemma}\label{lemma3}
For every $j \in \{1, \cdots , m^\prime\}$ and $w \in I_{j, 0}^\prime$, the inequality
\begin{eqnarray}
\sharp \left( l_w^\prime \cap h_A^\prime(l_w^\prime)\right) \leq 2 {\rm mod}(R_i) c_1 \nonumber
\end{eqnarray}
holds if $j=\sigma(i)$.
\end{lemma}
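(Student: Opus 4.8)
The plan is to read $\sharp\left(l_w'\cap h_A'(l_w')\right)$ as the number of transverse crossings of two closed geodesics of $(Y,u')$ in different directions, and to bound that number by a length comparison inside the single cylinder $R_j'$. First I would record the basic facts about $h_A'(l_w')$. Since $h_A'$ is affine with $D(h_A')=A$ and $l_w'$ is a horizontal closed geodesic (hence disjoint from $C(Y,u')$, so its image is too), $h_A'(l_w')$ is again an embedded closed geodesic; its period vector is $A$ applied to the period vector $(W_j',0)$ of the core-parallel curve $l_w'$, namely $(a_1W_j',c_1W_j')$, so its direction is $(a_1,c_1)$, which is transverse to the horizontal because $c_1\neq 0$, and its total length is $W_j'\sqrt{a_1^2+c_1^2}$, where $W_j'$ is the circumference of $R_j'$.

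Next I would localise. Since $l_w'\subset R_j'$, every point of $l_w'\cap h_A'(l_w')$ lies in $R_j'$, so it is enough to study $h_A'(l_w')\cap R_j'$. Identifying $R_j'$ with $[0,W_j')\times(0,H_j')$ so that $l_w'$ is the circle at some height $v_0$, the set $h_A'(l_w')\cap R_j'$ is a finite disjoint union of open straight segments of direction $(a_1,c_1)$; because $c_1\neq 0$ each such segment is strictly monotone in the vertical coordinate, hence joins the two boundary components $L_{j,1}'$ and $L_{j,2}'$ of $R_j'$, has vertical extent exactly $H_j'$, and so meets the circle $l_w'$ in exactly one point. Therefore $\sharp\left(l_w'\cap h_A'(l_w')\right)$ equals the number $N$ of these segments.

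Finally I would carry out the length estimate. A straight segment of direction $(a_1,c_1)$ with vertical extent $H_j'$ has length $\frac{H_j'}{|c_1|}\sqrt{a_1^2+c_1^2}$, so the part of $h_A'(l_w')$ lying in $R_j'$ has total length $N\cdot\frac{H_j'}{|c_1|}\sqrt{a_1^2+c_1^2}$, and this cannot exceed the full length $W_j'\sqrt{a_1^2+c_1^2}$ of $h_A'(l_w')$. Cancelling $\sqrt{a_1^2+c_1^2}$ gives $N\le |c_1|\,W_j'/H_j'=c_1\,{\rm mod}(R_j')$, and combining with $j=\sigma(i)$ and the estimate ${\rm mod}(R_{\sigma(i)}')\le 2\,{\rm mod}(R_i)$ established above yields $N\le 2\,{\rm mod}(R_i)\,c_1$, which is the claim. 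The only delicate points, and the main thing to pin down carefully, are the geometric claims in the middle step: that $h_A'(l_w')$ is a genuine embedded closed geodesic avoiding $C(Y,u')$ and carrying the stated period vector, and that each of its incursions into $R_j'$ really crosses the whole height of the cylinder rather than re-exiting through the boundary it entered; both follow at once from the facts that $h_A'$ is a homeomorphism preserving $C(Y,u')$ and that a straight line of nonzero slope inside a flat cylinder is monotone in the transverse coordinate.
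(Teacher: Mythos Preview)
Your argument is correct and follows the same route as the paper: compute the period vector $A\cdot(W_j',0)^T=(a_1W_j',c_1W_j')^T$ of $h_A'(l_w')$, observe that each passage through $R_j'$ costs vertical extent $H_j'$ and meets $l_w'$ once, deduce $\sharp(l_w'\cap h_A'(l_w'))\le W_j'c_1/H_j'={\rm mod}(R_j')c_1$, and finish with ${\rm mod}(R_{\sigma(i)}')\le 2\,{\rm mod}(R_i)$. The paper compresses this to a single line, while you spell out the length comparison and the monotonicity justification; the underlying estimate is identical.
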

\begin{proof}
Assume that $j=\sigma(i)$. 
Since $A {\tiny \left(
  \begin{array}{cccc}
    W_j^\prime\\
    0 
  \end{array}
  \right)}=
   {\tiny   \left(
  \begin{array}{cccc}
   W_j^\prime a_1\\
    W_j^\prime c_1
  \end{array}
  \right)}$, 
  for any $w \in I_{j,0}^\prime$,
  \begin{eqnarray}
\sharp \left( l_w^\prime \cap h_A^\prime(l_w^\prime)\right) \leq W_j^\prime c_1/H_j^\prime \leq 2 {\rm mod}(R_i) c_1. \nonumber
\end{eqnarray} 
\end{proof}
\begin{lemma}\label{lemma4}
\begin{eqnarray}
\displaystyle \sharp
\bigcup_{j=1}^{m^\prime}\bigcup_{r=1,2} \left(   L_{j,r}^\prime \cap h_A^\prime(L_{j,r}^\prime)\right) \leq \sum_{i=1}^m 4 {\rm mod}(R_i)c_1.\nonumber
\end{eqnarray}
\end{lemma}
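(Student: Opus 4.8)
The plan is to bound each summand $\sharp\bigl(L_{j,r}^\prime\cap h_A^\prime(L_{j,r}^\prime)\bigr)$ in exactly the way $\sharp\bigl(l_w^\prime\cap h_A^\prime(l_w^\prime)\bigr)$ is bounded in Lemma \ref{lemma3}, replacing the core curve $l_w^\prime$ by a boundary component $L_{j,r}^\prime$, and then to add these estimates up using that $\sigma$ is surjective together with the comparison ${\rm mod}(R_{\sigma(i)}^\prime)\leq 2\,{\rm mod}(R_i)$ already recorded. First I would note that $L_{j,r}^\prime$, traversed once, is a horizontal closed curve of length equal to the circumference $W_j^\prime$ of $R_j^\prime$, with holonomy vector $(W_j^\prime,0)$. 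Hence $h_A^\prime(L_{j,r}^\prime)$ is a closed curve made up of segments in the direction obtained by applying $A$ to the horizontal direction, and one traversal of it has holonomy vector $(a_1 W_j^\prime, c_1 W_j^\prime)$; since $c_1\neq0$, this direction is not horizontal, $h_A^\prime(L_{j,r}^\prime)$ is monotone in the vertical coordinate, and its total rise over one loop equals $c_1 W_j^\prime$.

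Next I would establish, for all $j$ and $r$,
\begin{eqnarray}
\sharp\bigl(L_{j,r}^\prime\cap h_A^\prime(L_{j,r}^\prime)\bigr)\leq \frac{c_1 W_j^\prime}{H_j^\prime}={\rm mod}(R_j^\prime)\,c_1. \nonumber
\end{eqnarray}
Every point of $L_{j,r}^\prime\cap h_A^\prime(L_{j,r}^\prime)$ is an endpoint, lying on $L_{j,r}^\prime$, of a maximal sub-arc of $h_A^\prime(L_{j,r}^\prime)$ contained in the closed cylinder $\overline{R_j^\prime}$. Since $R_j^\prime$ has no interior critical point, such a sub-arc is a straight segment running all the way across $R_j^\prime$, with one endpoint on $L_{j,r}^\prime$ and the other on the opposite boundary component, hence of vertical extent $H_j^\prime$; moreover distinct intersection points give distinct sub-arcs. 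Writing $N=\sharp\bigl(L_{j,r}^\prime\cap h_A^\prime(L_{j,r}^\prime)\bigr)$, it follows that $h_A^\prime(L_{j,r}^\prime)$ contains $N$ pairwise disjoint sub-arcs of vertical extent $H_j^\prime$ each, while its total rise is $c_1 W_j^\prime$; hence $N H_j^\prime\leq c_1 W_j^\prime$.

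Finally, summing over $r\in\{1,2\}$ and over $j$, and choosing for each $j$ an index $i(j)$ with $\sigma(i(j))=j$ (such $i(j)$ exists since $\sigma$ is surjective, and then $j\mapsto i(j)$ is injective), I obtain
\begin{eqnarray}
\sharp\bigcup_{j=1}^{m^\prime}\bigcup_{r=1,2}\bigl(L_{j,r}^\prime\cap h_A^\prime(L_{j,r}^\prime)\bigr) &\leq& \sum_{j=1}^{m^\prime}2\,{\rm mod}(R_j^\prime)\,c_1\leq \sum_{j=1}^{m^\prime}4\,{\rm mod}(R_{i(j)})\,c_1 \nonumber \\
&\leq& \sum_{i=1}^{m}4\,{\rm mod}(R_i)\,c_1, \nonumber
\end{eqnarray}
where the middle estimate uses ${\rm mod}(R_j^\prime)={\rm mod}(R_{\sigma(i(j))}^\prime)\leq 2\,{\rm mod}(R_{i(j)})$ and the last uses the injectivity of $j\mapsto i(j)$.

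The step I expect to be the main obstacle is the identification of each intersection point with an endpoint of an $\overline{R_j^\prime}$-sub-arc in the local estimate, at those intersection points which happen to be critical points of $(Y,u^\prime)$ lying on $L_{j,r}^\prime$ — a difficulty absent from Lemma \ref{lemma3}, since there $l_w^\prime$ contains no critical point. At a non-critical intersection point the curve $h_A^\prime(L_{j,r}^\prime)$ simply passes from the cylinder adjacent to $L_{j,r}^\prime$ straight into $R_j^\prime$, and the identification is immediate; at a critical intersection point the outgoing branch of $h_A^\prime(L_{j,r}^\prime)$ could a priori run into some other cylinder meeting that critical point rather than into $R_j^\prime$, and one must exclude this by examining the cyclic arrangement of cylinders around the critical point (in the spirit of the terse proof of Lemma \ref{lemma3}), or otherwise verify that the finitely many critical intersection points do not affect the inequality. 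Everything else is routine bookkeeping.
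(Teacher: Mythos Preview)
Your approach is essentially the same as the paper's. The paper's proof is just the three-line chain
\[
\sharp\bigcup_{j,r}\bigl(L_{j,r}^\prime\cap h_A^\prime(L_{j,r}^\prime)\bigr)
\;\leq\;\sum_{j=1}^{m^\prime}\sum_{r=1,2}\sharp\bigl(L_{j,r}^\prime\cap h_A^\prime(L_{j,r}^\prime)\bigr)
\;\leq\;\sum_{j=1}^{m^\prime}2\,\frac{W_j^\prime c_1}{H_j^\prime}
\;\leq\;\sum_{i=1}^{m}4\,{\rm mod}(R_i)\,c_1,
\]
i.e.\ it asserts the same per-component bound $\sharp\bigl(L_{j,r}^\prime\cap h_A^\prime(L_{j,r}^\prime)\bigr)\leq W_j^\prime c_1/H_j^\prime$ that you prove, and then sums using ${\rm mod}(R_{\sigma(i)}^\prime)\leq 2\,{\rm mod}(R_i)$ exactly as you do. Your worry about critical intersection points is not addressed in the paper either; it simply carries over the estimate from Lemma~\ref{lemma3} verbatim.
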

\begin{proof}
\begin{eqnarray}
\sharp \bigcup_{j=1}^{m^\prime}\bigcup_{r=1,2} \left(   L_{j,r}^\prime \cap h_A^\prime(L_{j,r}^\prime)\right) 
&\leq& \sum_{j=1}^{m^\prime} \sum_{r=1,2} \sharp \left(   L_{j,r}^\prime \cap h_A^\prime(L_{j,r}^\prime)\right) \nonumber\\
&\leq & \sum_{j=1}^{m^\prime} 2 
\frac{W_j^\prime c_1}{H_j^\prime} 
W_j^\prime c_1/H_j^\prime \nonumber \\
&\leq & \sum_{i=1}^m 4 {\rm mod}(R_i)c_1. \nonumber
\end{eqnarray}
\end{proof}
\begin{proposition}\label{prop}
\begin{eqnarray}
\displaystyle \sharp S(X, u) \leq  
2 b_0 c_1\sharp{\rm Ker}(D)\sum_{i=1}^m \left( \sharp{\rm Ker}(D) + \frac{3{\rm mod}(R_i)}{b_0}\right) \nonumber
\end{eqnarray}
\end{proposition}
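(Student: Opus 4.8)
The plan is to combine the preceding lemmas through the containment $\varphi(S(X,u)) \subseteq {\rm Cross}(A)$ of Lemma~\ref{lemma1}, together with the relation $\sharp S(X,u) \leq \sharp{\rm Ker}(D) \cdot \sharp \varphi(S(X,u))$ coming from the fact that $\varphi: X \to Y$ is a branched covering of degree $\sharp{\rm Ker}(D)$. First I would bound $\sharp{\rm Cross}(A)$ by splitting it into its two constituent unions exactly as in its definition. For the first union, I iterate over $j \in \{1,\dots,m'\}$, pick any $i$ with $\sigma(i)=j$, and estimate
\begin{eqnarray}
\sharp\left(\bigcup_{w \in I_{j,0}'}\left(l_w' \cap h_A'(l_w')\right)\right) \leq \sharp I_{j,0}' \cdot \max_{w \in I_{j,0}'}\sharp\left(l_w' \cap h_A'(l_w')\right). \nonumber
\end{eqnarray}
By Lemma~\ref{lemma2} the first factor is at most $\lceil b_0\sharp{\rm Ker}(D)/{\rm mod}(R_i)\rceil$, and by Lemma~\ref{lemma3} the second is at most $2\,{\rm mod}(R_i)c_1$; multiplying and using $\lceil x \rceil \leq x+1$ gives a bound of the form $2c_1\big(b_0\sharp{\rm Ker}(D) + {\rm mod}(R_i)\big)$ for the $j$-th term. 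Summing over $j$ — after checking that the assignment $j \mapsto$ (a chosen $i$ with $\sigma(i)=j$) is injective, so that the sum is dominated by $\sum_{i=1}^m$ over all of $\{1,\dots,m\}$ — produces $\sum_{i=1}^m 2c_1\big(b_0\sharp{\rm Ker}(D)+{\rm mod}(R_i)\big)$.

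Next, the second union is handled directly by Lemma~\ref{lemma4}, which already gives $\sum_{i=1}^m 4\,{\rm mod}(R_i)c_1$. Adding the two contributions yields
\begin{eqnarray}
\sharp{\rm Cross}(A) \leq \sum_{i=1}^m \left(2b_0 c_1 \sharp{\rm Ker}(D) + 2c_1{\rm mod}(R_i) + 4c_1{\rm mod}(R_i)\right) = \sum_{i=1}^m 2c_1\left(b_0\sharp{\rm Ker}(D) + 3{\rm mod}(R_i)\right). \nonumber
\end{eqnarray}
Finally I multiply by $\sharp{\rm Ker}(D)$ to pass from $\varphi(S(X,u))$ back to $S(X,u)$, and factor a $b_0$ out of the bracket to match the stated form:
\begin{eqnarray}
\sharp S(X,u) \leq \sharp{\rm Ker}(D)\sum_{i=1}^m 2c_1\left(b_0\sharp{\rm Ker}(D)+3{\rm mod}(R_i)\right) = 2b_0 c_1 \sharp{\rm Ker}(D)\sum_{i=1}^m\left(\sharp{\rm Ker}(D)+\frac{3{\rm mod}(R_i)}{b_0}\right). \nonumber
\end{eqnarray}

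The step I expect to be the main obstacle is the bookkeeping in the first union: one must be careful that the map $\sigma$ is surjective onto $\{1,\dots,m'\}$ (so every $j$ has at least one preimage $i$ to which Lemmas~\ref{lemma2} and~\ref{lemma3} apply) and that, having chosen one such $i$ per $j$, the resulting index set injects into $\{1,\dots,m\}$, so that replacing $\sum_{j=1}^{m'}$ by $\sum_{i=1}^m$ only enlarges the bound. One also needs the (harmless) observation that the degenerate cases in Lemma~\ref{lemma2} — $h_B'$ not fixing $R_j'$, or fixing it but swapping its boundary components — give $\sharp I_{j,0}' \in \{0,1\}$, which is still dominated by $\lceil b_0\sharp{\rm Ker}(D)/{\rm mod}(R_i)\rceil$, so the uniform estimate goes through. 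Everything else is the routine arithmetic recorded above.
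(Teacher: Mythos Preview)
Your proposal is correct and follows essentially the same route as the paper: bound $\sharp S(X,u)$ by $\sharp{\rm Ker}(D)\cdot\sharp{\rm Cross}(A)$ via Lemma~\ref{lemma1}, split ${\rm Cross}(A)$ into its two unions, apply Lemmas~\ref{lemma2}--\ref{lemma4}, use $\lceil x\rceil\leq x+1$, and rearrange. The only cosmetic difference is that the paper first merges the two contributions into the single expression $\sum_{i=1}^m 2\,{\rm mod}(R_i)c_1\bigl(\lceil b_0\sharp{\rm Ker}(D)/{\rm mod}(R_i)\rceil+2\bigr)$ and then bounds the ceiling, whereas you bound the ceiling first and then add; the arithmetic and the indexing bookkeeping (passing from $\sum_{j=1}^{m'}$ to $\sum_{i=1}^m$) are identical.
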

\begin{proof}
By Lemma \ref{lemma2}, Lemma \ref{lemma3} and Lemma \ref{lemma4}, we have
\begin{eqnarray}
\sharp S(X, u)&\leq& \sharp \varphi^{-1}({\rm Cross}(A)) \nonumber\\
&\leq & \sharp{\rm Ker}(D)\cdot  \sharp {\rm Cross}(A) \nonumber\\
& \leq & \sharp {\rm Ker}(D)\sum_{i=1}^m 2{\rm mod}(R_i) c_1 \left( \left\lceil \frac{b_0 \sharp {\rm Ker}(D)}{{\rm mod}(R_i)}\right\rceil+2 \right)\nonumber\\
& \leq & \sharp {\rm Ker}(D)\sum_{i=1}^m 2{\rm mod}(R_i) c_1 \left( \frac{b_0 \sharp {\rm Ker}(D)}{{\rm mod}(R_i)} +3 \right)\nonumber\\
&=& 2 b_0 c_1\sharp{\rm Ker}(D)\sum_{i=1}^m \left( \sharp{\rm Ker}(D) + \frac{3{\rm mod}(R_i)}{b_0}\right). \nonumber
\end{eqnarray}
\end{proof}
To prove Theorem \ref{mainthm1} and \ref{mainthm2}, we estimate $b_0c_1$, $\sharp {\rm Ker}(D)$, $\frac{{\rm mod}(R_i)}{b_0}$, and $\frac{b_0}{{\rm mod}(R_i)}$.
By Theorem \ref{fuchsian}, we have
\begin{eqnarray}
b_0c_1 \leq {\rm Area}\left(\mathbb{H}/\Gamma(X, u) \right). \nonumber
\end{eqnarray}
For all $i \in \{1, \cdots, m\}$, let $s_i^1, s_i^2$ be the numbers of horizontal saddle connections on two boundary components of $R_i$. 
 
\begin{lemma}\label{segments}
\begin{eqnarray}
\displaystyle \sum_{i=1}^{m}\left(s_i^1+s_i^2\right) \leq 4(3 g-3+n).
\nonumber
\end{eqnarray}
\end{lemma}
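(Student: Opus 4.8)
The plan is to bound the total number of horizontal saddle connections appearing on the boundaries of the cylinders $R_i$ by an Euler-characteristic argument on the flat surface $(X,u)$. When we remove the horizontal saddle connections from $X$, we obtain the disjoint union of the open cylinders $R_1,\dots,R_m$; conversely, $X$ is recovered by gluing the closed cylinders $\overline{R_i}$ along the saddle connections. So I would set up a CW-type decomposition of the closed surface $\overline{X}$ (the compactification, where punctures become marked points, hence critical points) whose $2$-cells are the interiors of the cylinders $R_i$, whose $1$-cells are the horizontal saddle connections, and whose $0$-cells are the critical points $C(X,u)$. The key numerical input is that each boundary component of each cylinder is a union of at least one saddle connection, so $\sum_i (s_i^1 + s_i^2)$ counts, with multiplicity two (each saddle connection lies on the boundary of cylinders on both its sides, possibly the same cylinder), the edges of this decomposition. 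Thus if $E$ denotes the number of horizontal saddle connections, then $E = \frac12\sum_{i=1}^m(s_i^1+s_i^2)$, provided I account correctly for the case where a saddle connection borders the same cylinder on both sides.

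Next I would run Euler's formula. With $V = \sharp C(X,u)$ vertices, $E$ edges, and $F = m$ two-cells on the genus-$g$ closed surface $\overline X$, we have $V - E + F = 2 - 2g$, so $E = V + m - (2-2g) = 2g - 2 + V + m$. Now I use the constraints available in the excerpt: $V = \sharp C(X,u) = n + (\text{number of flat singularities}) \geq n$, but I also need an \emph{upper} bound on $V$, which comes from the fact that a flat singularity of a finite-area flat structure has cone angle a positive multiple of $2\pi$ with total angle excess controlled by $2\pi(2g-2)$ via Gauss–Bonnet; combined with $n$ punctures one gets a bound of the shape $V \leq$ const$\cdot(g) + n$. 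The cleaner route, and the one I expect the author uses, is to bound everything against $3g-3+n$: we already have $m \leq 3g-3+n$ by the Remark, and the genus and puncture count are also absorbed, giving $\sum_i(s_i^1+s_i^2) = 2E = 2(2g-2+V+m)$, which I would then massage into the stated bound $4(3g-3+n)$.

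The main obstacle I anticipate is the bookkeeping around vertices and around degenerate gluings: a horizontal saddle connection can have both endpoints at the same critical point, can bound the same cylinder on both sides, and the "decomposition" of $\overline X$ into cylinders and saddle connections is not literally a CW complex without care (cylinder closures need not be cells). I would handle this by subdividing if necessary — adding an auxiliary vertex and edge on each cylinder — or by working directly with the Euler characteristic of the pair (cylinders glued along a graph), noting that each open cylinder has $\chi = 0$ so all of $\chi(\overline X) = 2-2g$ is carried by the graph of saddle connections and its vertices, i.e. $2 - 2g = \chi(\text{graph}) = V - E$. That immediately gives $E = V - 2 + 2g$ with no $m$ term, and then $\sum_i(s_i^1+s_i^2) = 2E = 2V + 4g - 4$; bounding $V \leq 2g - 2 + n + (\text{something}) $ and simplifying is where the constant $4$ and the quantity $3g-3+n$ must be reconciled, and that estimate on the number of critical points is the real content.
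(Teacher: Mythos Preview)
Your Euler-characteristic setup is exactly what the paper does, and your second pass is the correct one: since each open cylinder has $\chi=0$, the entire characteristic of $\overline{X}$ is carried by the graph of horizontal saddle connections, so $2-2g = V - E$ and hence
\[
\sum_{i=1}^m (s_i^1+s_i^2) = 2E = 2\bigl(2g-2+\sharp C(X,u)\bigr).
\]
This is precisely the identity the paper states.

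The genuine gap is the step you yourself flag as ``the real content'': you never actually bound $\sharp C(X,u)$. The paper does this via the quadratic differential $q$ with $q=dz^2$ in each chart of $u$. Every critical point lying in $X$ is then a zero of $q$ (order $\geq 1$), while each puncture has order $\geq -1$ (finite area forces this). Since $q$ is a holomorphic quadratic differential on $\overline{X}$, the sum of all orders equals $\deg K_{\overline X}^{\,2}=4g-4$; writing $s$ for the number of interior critical points one gets $s - n \leq 4g-4$, hence $\sharp C(X,u) = s+n \leq 4g-4+2n$. Plugging this into the identity above gives $2(2g-2+4g-4+2n)=4(3g-3+n)$.

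Your Gauss--Bonnet remark is in fact the same computation in disguise (cone-angle excess at a point of order $k$ is exactly $k\pi$, and the total excess is $\pi(4g-4)$), so had you carried it through you would have arrived at the same bound. As written, though, the proposal stops just short of the one inequality that makes the lemma go through.
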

\begin{proof}
By considering the Euler characteristic of $X$, the equation
\begin{eqnarray}
\displaystyle \sum_{i=1}^{m}\left(s_i^1+s_i^2\right) =2(2g-2+\sharp C(X, u)) \nonumber
\end{eqnarray}
holds.
Let $q$ be the holomorphic quadratic differential on $(X, u)$ such that $q=dz^2$ for $(U, z)\in u$.
All points of $X \setminus C(X, u)$ are not zeros of $q$.
Every point of $C(X, u) \cap X$ is a zero of $q$ and the orders of the punctures of $X$ with respect to $q$ are greater than or equal to $-1$. 
By the Riemann-Roch theorem, the sum of orders of all zeros of $q$ is $4g-4$.
Hence, $\sharp C(X, u) \leq 4g-4+2n$ and we obtain the claim.
\end{proof}
\begin{lemma}\label{ker}
\begin{eqnarray}
\sharp {\rm Ker}(D) \leq 4(3 g-3+n).\nonumber
\end{eqnarray}
\end{lemma}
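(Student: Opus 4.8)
The plan is to realize $\sharp{\rm Ker}(D)$ as a divisor of the number of horizontal separatrices of $(X,u)$, which Lemma \ref{segments} already controls. Recall that ${\rm Ker}(D)$ is the group of affine automorphisms of $(X,u)$ whose derivative lies in $\{\pm I\}$; each such map is a holomorphic automorphism of $X$, and since $3g-3+n>0$ the group ${\rm Aut}(X)$ is finite, so ${\rm Ker}(D)$ is finite. Keep the notation set up above: $\theta=0$ is an $m$-Jenkins-Strebel direction with cylinder decomposition $\{R_i\}_{i=1}^m$, and $s_i^1,s_i^2$ are the numbers of horizontal saddle connections on the two boundary components of $R_i$. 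Let $\mathcal{E}$ be the set of horizontal saddle connections and $\mathcal{H}$ the set of horizontal separatrices, i.e. germs of maximal horizontal geodesic rays issuing from points of $C(X,u)$. Because $\theta=0$ is a Jenkins-Strebel direction, every maximal horizontal geodesic is either a closed geodesic inside some cylinder or a saddle connection; hence each separatrix is an end of a unique horizontal saddle connection, giving a bijection of $\mathcal{H}$ with the set of (saddle connection, end) pairs, so $\sharp\mathcal{H}=2\,\sharp\mathcal{E}$. Counting each saddle connection once for each of its two sides, each of which lies on the boundary of exactly one $R_i$, we get $\sum_{i=1}^m(s_i^1+s_i^2)=2\,\sharp\mathcal{E}=\sharp\mathcal{H}$. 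Moreover $\mathcal{H}\neq\emptyset$: since $3g-3+n>0$ we have $C(X,u)\neq\emptyset$, and no critical point lies in the interior of a cylinder, so at least one horizontal saddle connection exists.

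Next I would let ${\rm Ker}(D)$ act on the finite set $\mathcal{H}$: an element $\gamma$ has derivative $\pm I$, hence preserves every direction, and being a homeomorphism fixing $C(X,u)$ setwise it permutes the horizontal separatrices. The key point is that this action is \emph{free}. Suppose $\gamma\in{\rm Ker}(D)$ fixes a separatrix $\ell$ issuing from a critical point $P$. Then $\gamma(P)=P$, and near $P$ the finite-order holomorphic map $\gamma$, which also satisfies $\gamma^{*}q=q$ for $q=dz^2$, acts (after a holomorphic change of coordinate) as a rotation of the Euclidean cone at $P$ by some angle $\beta$ with $0\le\beta<\Theta$, where $\Theta$ is the cone angle at $P$; its differential at regular points near $P$ is the Euclidean rotation by $\beta$, and since $\gamma\in{\rm Ker}(D)$ this rotation equals $\pm I$, forcing $\beta\in\pi\mathbb{Z}$. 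If $\beta=0$, then $\gamma$ is the identity on a neighborhood of $P$, hence on all of $X$ by connectedness; if $\beta\ge\pi$, the rotation carries $\ell$ to a different horizontal separatrix at $P$, contradicting $\gamma(\ell)=\ell$. Hence $\gamma={\rm id}$.

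Since a finite group acting freely on a nonempty finite set has order dividing the cardinality of that set, $\sharp{\rm Ker}(D)\le\sharp\mathcal{H}=\sum_{i=1}^m(s_i^1+s_i^2)\le 4(3g-3+n)$ by Lemma \ref{segments}, which proves the claim. The main obstacle is the local analysis in the freeness step: one must justify rigorously that an affine automorphism of the flat surface near a zero (or pole) of $q$ is, up to holomorphic change of coordinate, a genuine Euclidean cone rotation whose angle lies in $\pi\mathbb{Z}$, and that any nonzero such rotation moves every separatrix at $P$ --- ruling out in particular a ``partial rotation'' about a high-order cone point that has derivative $\pm I$ yet still stabilizes $\ell$, and treating the case where $P$ is a puncture on its own terms.
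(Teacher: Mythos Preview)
Your argument is correct and is essentially the paper's own proof, just expanded: the paper simply asserts that an element of ${\rm Ker}(D)$ is uniquely determined by the image of a fixed saddle connection together with its orientation, and then bounds the number of such oriented targets by $\sum_{i=1}^m(s_i^1+s_i^2)\le 4(3g-3+n)$ via Lemma~\ref{segments}. Your free action of ${\rm Ker}(D)$ on horizontal separatrices is exactly this statement rephrased (an oriented saddle connection is the same datum as the separatrix at its initial end), and your local cone-rotation analysis supplies the justification the paper leaves implicit.
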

\begin{proof}
Each element of ${\rm Ker}(D)$ is uniquely determined by the image of a saddle connection and its orientation if it exists.
Therefore, we obtain $\displaystyle \sharp {\rm Ker}(D)\leq \sum_{i=1}^{m}\left(s_i^1+s_i^2\right) \leq 4(3g-3+n)$.
\end{proof}
\begin{lemma}\label{ratio}
For all $i \in \{1, \cdots, m\}$, 
\begin{eqnarray}
\frac{1}{2}\exp \left(-\frac{5}{e}(3 g-3+n)\right) < \frac{b_0}{{\rm mod}(R_i)}< 2\exp\left(\frac{5}{e}(3 g-3+n)\right){\rm Area}\left(\mathbb{H}/\Gamma(X, u) \right) ^2. \nonumber 
\end{eqnarray}
\end{lemma}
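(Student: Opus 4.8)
The plan is to prove the two inequalities by rather different arguments; the left-hand one is elementary. For it I would exploit that $b_0/{\rm mod}(R_i)$ is, up to a denominator no larger than the number of cylinders, a positive integer. Fix $h_B\in{\rm Aff}^+(X,u)$ with $D(h_B)=\left[\left(\begin{smallmatrix}1&b_0\\0&1\end{smallmatrix}\right)\right]$. Since $h_B$ preserves the horizontal foliation it permutes $R_1,\dots,R_m$; if $R_i$ sits in an $h_B$-orbit of length $k$, then the first return $h_B^{\,k}$ fixes $R_i$ and restricts there to a Dehn twist of multiplicity $k\,b_0/{\rm mod}(R_i)$, which must be a positive integer. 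As $k\le m\le 3g-3+n$, this gives $b_0/{\rm mod}(R_i)\ge 1/(3g-3+n)$, and $1/(3g-3+n)>\frac12\exp(-\frac5e(3g-3+n))$ follows from the elementary bound $d\le e^{d-1}\le\frac12 e^{(5/e)d}$, valid for all $d\ge 1$.

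\emph{Upper bound.} I would first normalize. The flat surface $\left(\begin{smallmatrix}b_0^{-1/2}&0\\0&b_0^{1/2}\end{smallmatrix}\right)\cdot(X,u)$ has the same horizontal foliation, every ${\rm mod}(R_i)$ multiplied by $b_0^{-1}$, a Veech group conjugate to $\Gamma(X,u)$ and of the same hyperbolic area, and the primitive parabolic attaining $b_0$ turned into $\left[\left(\begin{smallmatrix}1&1\\0&1\end{smallmatrix}\right)\right]$; so it is enough to bound the largest value of $1/{\rm mod}(R_i')$ for this normalized surface. By the Veech dichotomy (Theorem \ref{dichotomy}) the horizontal moduli are pairwise commensurable, so one may write ${\rm mod}(R_i)=\mu_0 n_i$ with $n_i\in\mathbb Z_{>0}$ and $\gcd(n_1,\dots,n_m)=1$; the minimal horizontal multi-twist then has parameter $\mu_0\,{\rm lcm}(n_1,\dots,n_m)$ (up to the bounded permutation subtlety from the lower-bound step), whence $b_0/{\rm mod}(R_i)={\rm lcm}(n_1,\dots,n_m)/n_i\le{\rm lcm}(n_1,\dots,n_m)$. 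The problem thus reduces to bounding this least common multiple.

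This last bound is the heart of the matter, and the step I expect to be the main obstacle. The ingredients to combine are: $m<3g-3+n$ cylinders and at most $2(3g-3+n)$ horizontal saddle connections (Lemma \ref{segments}); an element of the normalized Veech group with lower-left entry $c$ obeying $1\le|c|<{\rm Area}(\mathbb H/\Gamma(X,u))$ (Theorem \ref{fuchsian}), whose affine realization carries the horizontal cylinder decomposition onto a transverse Jenkins-Strebel decomposition and, measured against the horizontal core curves, yields an inequality bounding the largest value of $1/{\rm mod}(R_i')$ by $|c|$ times a ratio of two horizontal circumferences; the estimate $b_0c_1\le{\rm Area}(\mathbb H/\Gamma(X,u))$ together with the Shimizu lemma; and a Chebyshev-type bound ${\rm lcm}(1,\dots,N)\le e^{\kappa N}$. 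Assembling these should produce the two asserted factors $\exp(\frac5e(3g-3+n))$ and ${\rm Area}(\mathbb H/\Gamma(X,u))^2$. The genuinely hard point is to control how unbalanced the cylinder moduli can be --- equivalently, to bound ${\rm lcm}(n_1,\dots,n_m)$, or the spread of the horizontal circumferences --- purely in terms of the topological type and the hyperbolic area; everything preceding it is bookkeeping with affine maps and flat cylinders, and essentially the same estimate underlies Theorem \ref{mainthm2}.
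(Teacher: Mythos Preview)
Your lower-bound argument has a real gap. When $R_i$ has $h_B$-period $k$, the return map $h_B^{k}$ preserves $R_i$ but need not fix its boundary components pointwise: it may swap the two components, or cyclically permute the saddle connections on either one. Without the boundaries fixed pointwise the restriction to $R_i$ is an affine shear $(u,v)\mapsto(u+kb_0 v + c,\,v)$ with a generally nonzero drift $c$, and there is no reason for $kb_0/{\rm mod}(R_i)$ to be an integer. The paper handles exactly this issue by passing to a much higher power $h_B^{\alpha}$ that fixes every boundary of every $R_j$ pointwise, taking $\alpha = 2m_0\prod_i \alpha_i^1\alpha_i^2$ with $m_0\leq G(m)$ (the Landau function) accounting for the cylinder permutation and $\alpha_i^r\leq s_i^r$ accounting for the permutations of saddle connections. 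The bound $\alpha<2\exp\!\bigl(\tfrac{5}{e}(3g-3+n)\bigr)$ then follows from $G(m)\leq e^{m/e}$, the AM--GM inequality applied to $\prod_i s_i^1 s_i^2$, and Lemma~\ref{segments}. It is this $\alpha$, not the orbit length $k\leq m$, that produces the exponential in the statement; what you call a ``bounded permutation subtlety'' is the entire source of the factor.

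For the upper bound you correctly isolate the hard step but do not carry it out, and the proposed route --- reducing to ${\rm lcm}(n_1,\dots,n_m)$ and invoking a Chebyshev-type estimate --- is not what the paper does and does not obviously work, since you have no a~priori control on how large the $n_i$ themselves are. The paper's argument is different and reuses the same $\alpha$. One considers the conjugate $h=h_A^{-1}\circ h_B^{\alpha}\circ h_A$, whose derivative has lower-left entry $-\alpha b_0 c_1^2$; measuring the vertical displacement of the horizontal core curve $C_i$ under $h$ gives
\[
W_i\,\alpha b_0 c_1^2=\sum_{j=1}^m \#\bigl(h(C_i)\cap C_j\bigr)\,H_j,
\]
and the key observation is $\#\bigl(h(C_i)\cap C_i\bigr)=\#\bigl(h_B^{\alpha}(h_A(C_i))\cap h_A(C_i)\bigr)\geq 1$, since $h_A(C_i)$ must cross some cylinder on whose boundary $h_B^{\alpha}$ is the identity while twisting the interior. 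This yields ${\rm mod}(R_i)\,\alpha b_0 c_1^2\geq 1$, and then
\[
\frac{b_0}{{\rm mod}(R_i)}=\frac{(b_0c_1)^2}{{\rm mod}(R_i)\,b_0 c_1^2}\leq \alpha\,(b_0c_1)^2< 2\exp\!\Bigl(\tfrac{5}{e}(3g-3+n)\Bigr){\rm Area}\bigl(\mathbb{H}/\Gamma(X,u)\bigr)^2
\]
using $b_0c_1\leq{\rm Area}(\mathbb{H}/\Gamma(X,u))$ from Theorem~\ref{fuchsian}. So the two factors you were hunting for both come from $\alpha$ and $(b_0c_1)^2$, not from any prime-counting bound.
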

\begin{remark}
 Theorem \ref{mainthm2} is immediately proved from Lemma \ref{ratio}.
\end{remark}
To prove Lemma \ref{ratio}, we consider the Landau function $G(n)$. 
The Landau function $G(n)$ is the greatest order of an element of the symmetric group $S_n$ of degree $n$.
\begin{remark}
Landau showed that 
\begin{eqnarray}
\lim_{n \rightarrow \infty}\frac{\log(G(n))}{\sqrt{n \log(n)}}=1\nonumber
\end{eqnarray}
 and
Massias \cite{Massias84} showed that
\begin{eqnarray}
\displaystyle \log(G(n))\leq 1.05313\ldots \sqrt{n \log(n)} \nonumber
\end{eqnarray}
with equality at $n=1319766$. 
In this paper, we apply the following inequality which is easily proved
\begin{eqnarray}
\displaystyle G(n)\leq \exp \left(\frac{n}{e} \right). \nonumber
\end{eqnarray}
This inequality is better than Massias's if $n \leq 27$.
\end{remark}
\begin{proof}[Proof of Lemma \ref{ratio}]
Take $h_A, h_B \in {\rm Aff}^+(X, u)$ with $D(h_A)=A, D(h_B)=B$.
There exists $m_0 \leq G(m)$ such that $h_B^{m_0}(R_i)=R_i$ for all $i \in \{1, \cdots, m \}$.
Then $h_B^{2m_0}$ preserves each boundary component of $R_i$.
For each $i$, we choose $\alpha_i^r \leq s_i ^r$ $(r=1,2)$ such that  $h_B^{2m_0 \alpha_i^1 \alpha_i^2}$ fixes each boundary component of $R_i$ pointwise.
Set
\begin{eqnarray}
\displaystyle \alpha=2 m_0 \prod_{i=1}^m \alpha_i^1 \alpha_i^2 . \nonumber 
\end{eqnarray}
By Lemma \ref{segments}, 
\begin{eqnarray}
\alpha &\leq & \displaystyle 2 G(m) \prod_{i=1}^m s_i^1 s_i^2 \nonumber \\
&\leq & 2 \exp \left(\frac{m}{e} \right)\left\{ \sum_{i=1}^m \left( s_i^1+s_i^2 \right)/2 m \right\}^{2 m} \nonumber \\
&\leq  &2 \exp \left(\frac{1}{e}(3 g-3+n) \right)\left\{ \frac{2(3 g-3+n)}{m} \right\}^{2 m}  \nonumber\\
&< & 2 \exp \left(\frac{5}{e} (3 g-3+n)\right). \nonumber
\end{eqnarray}
Let $C_i$ be a core curve of the cylinder $R_i$ for each $i \in \{1, \cdots, m \}$.
Then $h_B^\alpha$ is a composition of right hand Dehn twists along $C_i$ $(i=1, \cdots, m)$.
Hence, for every $i \in \{1, \cdots, m \}$, there exists  $n_i \geq 1$ such that $\alpha b_0= n_i {\rm mod}(R_i)$.
Thus
\begin{eqnarray}
\frac{{\rm mod}(R_i)}{b_0}=\frac{\alpha}{n_i} < 2 \exp \left(\frac{5}{e} (3 g-3+n)\right). \nonumber
\end{eqnarray}
Next, let us consider the affine map $h=h_A^{-1}\circ h_B^\alpha \circ h_A$.
The derivative of $h$ is
\begin{eqnarray}
D(h)= A^{-1}BA=\left[\left(
  \begin{array}{cccc}
    1+\alpha b_0 c_1 d_1 & \alpha b_0 d_1^2 \\
    -\alpha b_0 c_1^2 & 1-\alpha b_0 c_1 d_1
  \end{array}
  \right)
  \right] .\nonumber
\end{eqnarray}
 Since $D(h) { \left(
  \begin{array}{cccc}
    W_i \\
    0
  \end{array}
  \right)}=
   {   \left(
  \begin{array}{cccc}
   W_i(1+\alpha b_0 c_1 d_1) \\
   - W_i \alpha b_0 c_1^2
  \end{array}
  \right)}$, 
we have
\begin{eqnarray}
\displaystyle W_i\alpha b_0 c_1^2= \sum_{j=1}^m \sharp \left(h(C_i)\cap C_j \right) H_j \nonumber
\end{eqnarray}  
for all $i \in \{1, \cdots, m \}$.
Then we have 
\begin{eqnarray}
{\rm mod}(R_i)\alpha b_0 c_1^2 &=& \frac{1}{H_i} \sum_{j=1}^m \sharp \left(h(C_i)\cap C_j \right) H_j \nonumber\\
&\geq& \sharp \left(h(C_i)\cap C_i \right) \nonumber\\
&=& \sharp \Bigl( h_B^\alpha \bigl(h_A\left(C_i\right)\bigr) \cap h_A\left(C_i\right) \Bigr)  \nonumber\\
&\geq & 1 \nonumber
\end{eqnarray}
since $h_A\left(C_i\right)$ intersects at least one of $C_j$'s and $h_B^\alpha$
fixes all boundary components of $R_j$'s.
Therefore, by Theorem \ref{fuchsian}, 
\begin{eqnarray}
\frac{b_0}{{\rm mod}(R_i)} &=&\frac{\left(b_0 c_1 \right)^2}{{\rm mod}(R_i) b_0 c_1^2} \nonumber\\
&<& 2 \exp \left(\frac{5}{e} (3 g-3+n)\right) {\rm Area}\left(\mathbb{H}/\Gamma(X, u) \right) ^2. \nonumber
\end{eqnarray}
\end{proof}
\begin{proof}[Proof of Theorem \ref{mainthm1}]
Since $m$ is the number of cylinders of a cylinder decomposition of $(X, u)$, we have $m \leq 3g-3+n$.
By 
Theorem \ref{fuchsian}, Proposition \ref{prop}, Lemma \ref{ker}, and Lemma \ref{ratio}, we have
\begin{eqnarray}
\displaystyle 
&&\sharp S(X, u) \nonumber\\
&\leq & 
2 b_0 c_1\sharp{\rm Ker}(D)\sum_{i=1}^m \left( \sharp{\rm Ker}(D) + \frac{3{\rm mod}(R_i)}{b_0}\right) \nonumber\\
&< & 32 \pi (2p-2+k) (3 g-3+n)^2  \left\{ 2(3 g-3+n) + 3 \exp \left(\frac{5}{e}(3 g-3+n)\right) \right\}.\nonumber
\end{eqnarray}
\end{proof}
Let $\Gamma$ be a finite index subgroup of $\Gamma(X, u)$ which is of type $(p^\prime, k^\prime : \nu_1^\prime, \cdots, \nu_{k^\prime}^\prime )$ $(\nu_i^\prime \in \{2, 3, \cdots, \infty\})$.
Let $(M^\prime, \pi^\prime, B^\prime)$ be a holomorphic family of Riemann surfaces corresponding to $\Gamma$ as in Section \ref{Preliminaries}.
By the same argument as above, we obtain the following corollary.
\begin{corollary}
The number of holomorphic sections of the holomorphic family $(M^\prime, \pi^\prime, B^\prime)$ of Riemann surfaces is at most
\begin{eqnarray}
 32 \pi (2p^\prime-2+k^\prime) (3 g-3+n)^2  \left\{ 2(3 g-3+n) + 3 \exp \left(\frac{5}{e}(3 g-3+n)\right)\right\}. \nonumber 
\end{eqnarray}
\end{corollary}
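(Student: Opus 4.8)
The plan is to repeat, almost verbatim, the proof of Theorem \ref{mainthm1}, replacing the full Veech group $\Gamma(X,u)$ by the finite-index subgroup $\Gamma$. The key observation is that the ingredients to which that proof applies its combinatorial estimates — the kernel ${\rm Ker}(D)$, the quotient surface $Y=X/{\rm Ker}(D)$ with its induced flat structure $u^\prime$, the distinguished direction $\theta=0$ (a Jenkins--Strebel direction of $(X,u)$), and the cylinder decompositions $\{R_i\}_{i=1}^m$ of $(X,u)$ and $\{R_j^\prime\}_{j=1}^{m^\prime}$ of $(Y,u^\prime)$ — depend only on $(X,u)$, not on which subgroup of $\Gamma(X,u)$ one singles out. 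By the corollary to Theorem \ref{characterization}, every holomorphic section of $(M^\prime,\pi^\prime,B^\prime)$ is determined by a point $a\in X$ with $D^{-1}(\Gamma)\{a\}={\rm Ker}(D)\{a\}$, so it suffices to bound the cardinality of $S_\Gamma(X,u)=\{z\in X: D^{-1}(\Gamma)\{z\}={\rm Ker}(D)\{z\}\}$. As in Section \ref{main}, the quotient map $\varphi:X\to Y$ carries $S_\Gamma(X,u)$ onto the fixed-point set of $\Gamma\cong D^{-1}(\Gamma)/{\rm Ker}(D)$, regarded as a subgroup of ${\rm Aff}^+(Y,u^\prime)$, so that $\sharp S_\Gamma(X,u)\le \sharp{\rm Ker}(D)\cdot\sharp\{w\in Y:\Gamma\{w\}=\{w\}\}$.

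First I would introduce $b_0^\Gamma$ and $c_1^\Gamma$, defined exactly as $b_0$ and $c_1$ were but with the infima taken over $\Gamma$ rather than over $\Gamma(X,u)$. These are finite: since $\theta=0$ is a Jenkins--Strebel direction, Theorem \ref{dichotomy} furnishes a parabolic in $\Gamma(X,u)$ fixing $\infty$, a nontrivial power of which lies in the finite-index subgroup $\Gamma$; and $\Gamma$, being of finite index in the non-elementary group $\Gamma(X,u)$ (which, as in the proof of Theorem \ref{mainthm1}, contains an element moving the parabolic fixed point), remains non-elementary, hence also contains an element with nonzero lower-left entry. Choosing $A\in\Gamma$ attaining $c_1^\Gamma$ and $B\in\Gamma$ attaining $b_0^\Gamma$, I would then run Lemma \ref{lemma1} through Lemma \ref{lemma4} and Proposition \ref{prop} word for word, with $(\Gamma,b_0^\Gamma,c_1^\Gamma,A,B)$ in the roles of $(\Gamma(X,u),b_0,c_1,A,B)$; nothing in those arguments used any property of $\Gamma(X,u)$ beyond its containing these two elements and acting on $(Y,u^\prime)$. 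This gives
\[
\sharp S_\Gamma(X,u)\le 2 b_0^\Gamma c_1^\Gamma\,\sharp{\rm Ker}(D)\sum_{i=1}^m\left(\sharp{\rm Ker}(D)+\frac{3\,{\rm mod}(R_i)}{b_0^\Gamma}\right).
\]
It then remains to bound $b_0^\Gamma c_1^\Gamma$, $\sharp{\rm Ker}(D)$, $m$, and ${\rm mod}(R_i)/b_0^\Gamma$. Theorem \ref{fuchsian}, applied to $\Gamma$ after a diagonal conjugation normalizing its primitive parabolic at $\infty$ (which alters neither the type $(p^\prime,k^\prime:\nu_1^\prime,\dots,\nu_{k^\prime}^\prime)$ nor the product $b_0^\Gamma c_1^\Gamma$ nor the hyperbolic area, and under which $\infty$ remains a cusp so $k_0\ge 1$), yields $b_0^\Gamma c_1^\Gamma\le{\rm Area}(\mathbb{H}/\Gamma)=2\pi\left(2p^\prime-2+\sum_{r=1}^{k^\prime}(1-1/\nu_r^\prime)\right)\le 2\pi(2p^\prime-2+k^\prime)$. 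The inequality $\sharp{\rm Ker}(D)\le 4(3g-3+n)$ is Lemma \ref{ker} unchanged, and $m\le 3g-3+n$ as noted in the proof of Theorem \ref{mainthm1}. Finally, because $\Gamma\subset\Gamma(X,u)$ we have $b_0^\Gamma\ge b_0$, hence ${\rm mod}(R_i)/b_0^\Gamma\le{\rm mod}(R_i)/b_0<2\exp\left(\frac{5}{e}(3g-3+n)\right)$ by the lower estimate in Lemma \ref{ratio}. Substituting these bounds and performing the identical arithmetic as in the proof of Theorem \ref{mainthm1} produces the asserted bound, with $2p-2+k$ replaced throughout by $2p^\prime-2+k^\prime$.

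There is no essential new difficulty here; this is why the statement is phrased as a corollary. The two points to watch — rather than genuine obstacles — are (i) that $b_0^\Gamma$ and $c_1^\Gamma$ are actually finite, which is exactly where finite index and non-elementarity are used; and (ii) that the monotonicity $b_0^\Gamma\ge b_0$ is precisely what allows Lemma \ref{ratio} to be reused verbatim instead of being re-proved for $\Gamma$. Note that only the lower estimate of Lemma \ref{ratio} enters, since we are bounding the section count and not proving an analogue of Theorem \ref{mainthm2}; in particular the factor ${\rm Area}(\mathbb{H}/\Gamma(X,u))^2$ appearing in the upper estimate of Lemma \ref{ratio} plays no role. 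Everything else is a line-by-line transcription of the argument already given for $\Gamma(X,u)$.
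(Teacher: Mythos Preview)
Your proposal is correct and is precisely the detailed fleshing-out of what the paper means by ``the same argument as above'' (the paper gives no further proof of this corollary). The observation that $b_0^\Gamma\ge b_0$ lets you invoke Lemma~\ref{ratio} as already stated for $\Gamma(X,u)$, rather than re-running its proof for $\Gamma$, is a clean shortcut and entirely in the spirit of the paper's one-line justification.
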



\section*{Acknowledgments}
The author would like to thank Hiroshige Shiga for introducing me to this research area.

\nocite{GutHubSch03}
\nocite{Imayoshi09}
\nocite{Moller06}
\nocite{Veech91}
\bibliography{ref}

\end{document}